\documentclass[12pt, reqno]{amsart}
\usepackage{amsmath, amsthm, amscd, amsfonts, amssymb, graphicx, color}
\usepackage[bookmarksnumbered, colorlinks=false, plainpages]{hyperref}

\newtheorem{theorem}{Theorem}[section]
\newtheorem{lemma}[theorem]{Lemma}

\newtheorem{corollary}[theorem]{Corollary}
\theoremstyle{definition}
\newtheorem{definition}[theorem]{Definition}

\theoremstyle{remark}
\newtheorem{remark}[theorem]{Remark}
\numberwithin{equation}{section}

\allowdisplaybreaks
\begin{document}

\title[Infinite Matrix Operators and $E$-Frames in Hilbert Spaces]
{Infinite Matrix Operators and $E$-Frames in Hilbert Spaces}

\author[H.Hedayatirad]{Hassan Hedayatirad}
\address{Hassan Hedayatirad \\ Department of Mathematics and Computer
Sciences, Hakim Sabzevari University, Sabzevar, P.O. Box 397, IRAN}
\email{ \rm hasan.hedayatirad@hsu.ac.ir; hassan.hedayatirad67@gmail.com}
\author[T.L. Shateri]{Tayebe Lal Shateri }
\address{Tayebe Lal Shateri \\ Department of Mathematics and Computer
Sciences, Hakim Sabzevari University, Sabzevar, P.O. Box 397, IRAN}
\email{ \rm  t.shateri@hsu.ac.ir; t.shateri@gmail.com}
\thanks{*The corresponding author:
t.shateri@hsu.ac.ir; t.shateri@gmail.com (Tayebe Lal Shateri)}
 \subjclass[2010] {Primary 42C15;
Secondary 54D55.} \keywords{$E$-frame, Hilbert space, direct sum of Hilbert spaces, Matrix mapping , dual $E$-frame.}
 \maketitle
\begin{abstract}
The concept of an $E$-frame, recently introduced in frame theory, is obtained by applying an infinite invertible complex matrix to a sequence of elements of a Hilbert space $\mathcal{H}$. Here, $E$ is considered as a matrix mapping on the sequence space $\bigoplus_{n=1}^{\infty}\mathcal{H}$. A natural question is to determine the conditions on a sequence $\Psi=\{\psi_k\}_{k=1}^{\infty}$ and a matrix $E$ under which $E\Psi$ becomes a frame. In this paper, we show that if $E$ acts surjectively on $\bigoplus_{n=1}^{\infty}\mathcal{H}$, then every ordinary frame is an $E$-frame.
\end{abstract}
\section{Introduction and Preliminaries}
Frames were originally introduced by Duffin and Schaeffer in 1952 \cite{DS} and were later revisited by Daubechies, Grossmann, and Meyer in 1986 \cite{DG}, after which their applications became widespread. Due to their advantageous properties, frames have been extensively used in the characterization of function spaces, signal processing, and various other areas. We refer the reader to \cite{ch,CA,HAN} for an introduction to frame theory and its applications. Over the past decades, several generalizations of frames have been developed for specific applications, including frames of subspaces, fusion frames, and $g$-frames.

Recently, matrix-generated frame structures have attracted attention. In particular, the notion of an $E$-frame, introduced in \cite{TAL}, extends the classical frame concept by applying an infinite matrix transformation to a sequence in a Hilbert space. More precisely, given an infinite matrix $E=(E_{n,k})_{n,k\geq 1}$, a sequence $\Psi=\{\psi_k\}_{k=1}^{\infty}$ is transformed into the sequence
$E\Psi=\left\{\sum_{k=1}^{\infty}E_{n,k}\psi_k\right\}_{n=1}^{\infty}$,
whenever the corresponding series are convergent. This approach provides a flexible framework for studying frame-like properties under infinite matrix transformations.

The main purpose of this paper is to investigate the relationship between infinite matrix mappings and $E$-frames in separable Hilbert spaces. Since infinite matrices do not necessarily define bounded operators on Hilbert sequence spaces, establishing suitable conditions under which they induce bounded operators is an essential step. We first study infinite matrices acting on the Hilbert space $\bigoplus_{n=1}^{\infty}\mathcal{H}$ 
and prove that, under appropriate assumptions, such matrices induce bounded operators on this space. We also show that these conditions imply the boundedness of the corresponding operators on $\ell^2(\mathbb{N})$.

Using these operator-theoretic results, we prove that if an infinite invertible matrix defines a surjective matrix mapping on $\bigoplus_{n=1}^{\infty}\mathcal{H}$, then it induces bounded bijective operators on both $\bigoplus_{n=1}^{\infty}\mathcal{H}$ and $\ell^2(\mathbb{N})$. These properties allow us to establish that every classical frame satisfying suitable assumptions becomes an $E$-frame. In particular, we obtain new classes of $E$-frames generated by infinite invertible matrix transformations.

We also investigate the behavior of matrix conjugation and show that the conjugate matrix associated with a suitable infinite matrix preserves the relevant mapping properties. Finally, we characterize when a Bessel sequence becomes an $E$-Bessel sequence in terms of the associated matrix transformation on the analysis coefficients.

The paper is organized as follows. First, we recall the necessary definitions and preliminary results concerning frames, Bessel sequences, and matrix mappings. In Section 2, we establish the main operator-theoretic results for infinite matrices and apply them to the theory of $E$-frames.

Throughout this paper, we assume that $\mathcal H$ is a separable Hilbert space. A countable family $\mathcal{F}=\lbrace f_{k}\rbrace_{k=1}^{\infty}$ in  $\mathcal{H}$ is called a frame for $\mathcal{H}$ if there exist constants $0<A\leq B<\infty$ such that 
\begin{equation}
A\Vert f\Vert^{2}\leq\sum_{k=1}^{\infty}\big\vert\big<f,f_{k}\big>\big\vert^{2}\leq B\Vert f\Vert^{2}\qquad (f\in\mathcal{H}).
\end{equation}
$A$ and $B$ are called the frame bounds. If just the right inequality holds, we say $\mathcal{F}$ is a Bessel sequence for $\mathcal{H}$ with bound $B$. A Riesz basis for $\mathcal{H}$ is a family of the form $\lbrace Ue_{k}\rbrace_{k=1}^{\infty}$, where $\lbrace e_{k}\rbrace_{k=1}^{\infty}$ is an orthonormal basis for $\mathcal{H}$ and $U:\mathcal{H}\longrightarrow\mathcal{H}$ is a bounded bijective operator. \\ 
For the sequence $(\mathcal{H}_{n})_{n=1}^{\infty}$ of separable Hilbert spaces, suppose that $$\bigoplus_{n=1}^{\infty}\mathcal{H}_{n}=\bigg\lbrace\lbrace f_{n}\rbrace_{n=1}^{\infty}| f_{n}\in\mathcal{H}_{n},\sum_{n=1}^{\infty}\|f_{n}\|^{2}<\infty\bigg\rbrace.$$ We can define a well defined inner product $\big<.\,,.\big>$ on $\bigoplus_{n=1}^{\infty}\mathcal{H}_{n}$ defined by $$\big<\lbrace f_{n}\rbrace_{n=1}^{\infty},\lbrace g_{n}\rbrace_{n=1}^{\infty}\big>=\sum_{n=1}^{\infty}\big<f_{n},g_{n}\big>.$$ It is well known that $\bigoplus_{n=1}^{\infty}\mathcal{H}_{n}$ is a Hilbert space with respect to this inner product which is called the Hilbert space direct sum of $(\mathcal{H}_{n})_{n=1}^{\infty}$ \cite{Conway}.\\
\begin{definition}
Let $\mathcal X$ and $\mathcal Y$ be two sequence spaces and $E=(E_{n,k})_{n,k\geq 1}$ be an infinite matrix of real or complex numbers. We say that $E$ defines a matrix mapping from $\mathcal X$ into $\mathcal Y$, if for every sequence $x=\{x_n\}_{n=1}^{\infty}$ in $\mathcal X$, the sequnce $Ex=\{(Ex)_n\}_{n=1}^{\infty}$ is in $\mathcal Y$, where
$$(Ex)_n=\sum_{k=1}^{\infty}E_{n,k}x_k,\quad\left(n\in\mathbb{N}\right).$$
\end{definition}
Using the matrix mapping concept,  the authors in \cite{TAL}, have introduced a new notion of frames which is called $E$-frame. Let $E$ be an infinite invertible matrix mapping on $\bigoplus_{n=1}^{\infty}\mathcal{H}$. Then for each $\mathcal{F}=\lbrace f_{k}\rbrace_{k=1}^{\infty}\in\bigoplus_{n=1}^{\infty}\mathcal{H}$, $$E\mathcal{F}=\left\lbrace\sum_{k=1}^{\infty}E_{n,k}f_{k}\right\rbrace_{n=1}^{\infty}.$$
\begin{definition}\cite{TAL}
The sequence $\mathcal{F}=
\lbrace f_{k}\rbrace_{k=1}^{\infty}$ is called an $E$-frame for $\mathcal{H}$ if there exist constants $0<A\leq B<\infty$ such that 
\begin{equation}\label{$E$-frame}
A\|f\|^{2}\leq\sum_{n=1}^{\infty}\left\vert\left<f,\left(E\mathcal{F}\right)_{n}\right>\right\vert^{2}\leq B\|f\|^{2}\qquad (f\in\mathcal{H}).
\end{equation}
\end{definition}
If just the right inequality in (\ref{$E$-frame}) holds, then $\mathcal{F}$ is called an $E$-Bessel sequence for $\mathcal{H}$ with  $E$-Bessel bound $B$. One can easily check that for given constant $B>0$, the sequence $\mathcal{F}$ is an $E$-Bessel sequence if and only if the operator $T_{E\mathcal{F}}$ defined by $$T_{E\mathcal{F}}:\ell^{2}(\mathbb{N}):\longrightarrow\mathcal{H}\,,T\lbrace c_{k}\rbrace_{k=1}^{\infty}=\sum_{k=1}^{\infty}c_{k}\left(E\mathcal{F}\right)_{k}$$ is a bounded operator from $\ell^{2}(\mathbb{N})$ in to $\mathcal{H}$ with $\|T\|\leq\sqrt{B}$. We call $T_{E\mathcal{F}}$ the pre $E$-frame operator. It's adjoint, the analysis operator, is given by 
\begin{equation}\label{analysis}
T_{E\mathcal{F}}^{*}:\mathcal{H}\longrightarrow\ell^{2}(\mathbb{N})\,,T_{E\mathcal{F}}^{*}f=\left\lbrace\left<f,\left(E\mathcal{F}\right)_{k}\right>\right\rbrace_{k=1}^{\infty}.
\end{equation} Composing $T_{E\mathcal{F}}$ and $T_{E\mathcal{F}}^{*}$, the $E$-frame operator $$S_{E\mathcal{F}}:\mathcal{H}\longrightarrow\mathcal{H}\,,S_{E\mathcal{F}}f=\sum_{k=1}^{\infty}\left<f,\left(E\mathcal{F}\right)_{k}\right>\left(E\mathcal{F}\right)_{k}$$ is obtained. It is easily could be checked that $S_{E\mathcal{F}}$ is bounded, invertible, self-adjoint and positive \cite{TAL}. This leads us to the following reconstruction formulas
\begin{equation}\label{rec1}
f=\sum_{n=1}^{\infty}\left<f,\left(E\lbrace S_{E\mathcal{F}}^{-1}f_{k}\rbrace_{k=1}^{\infty}\right)_{n}\right>\left(E\mathcal{F}\right)_{n}=\sum_{n=1}^{\infty}\left<f,S_{E\mathcal{F}}^{-1}\left(E\mathcal{F}\right)_{n}\right>\left(E\mathcal{F}\right)_{n}
\end{equation}
 and $$f=\sum_{n=1}^{\infty}\left<f,\left(E\mathcal{F}\right)_{n}\right>\left(E\lbrace S_{E\mathcal{F}}^{-1}f_{k}\rbrace_{k=1}^{\infty}\right)_{n}=\sum_{n=1}^{\infty}\left<f,\left(E\mathcal{F}\right)_{n}\right>S_{E\mathcal{F}}^{-1}\left(E\mathcal{F}\right)_{n},$$ for all $f\in\mathcal{H}$.
Let $\lbrace e_{k}\rbrace_{k=1}^{\infty}$ be an orthonormal basis for a separable Hilbert space $\mathcal{H}$. An $E$-Riesz basis for $\mathcal{H}$ is a family of the form $\big\lbrace U\big(E^{-1}\lbrace e_{k}\rbrace_{k=1}^{\infty}\big)_{n}\big\rbrace_{k=1}^{\infty}$, where $U$ is a bounded bijective operator on $\mathcal{H}$.
\section{main results}\label{main}
In this section, we suppose that $\mathcal{H}$ is a separable Hilbert space and $E=(E_{n,k})_{n,k\geq 1}$ is an infinite complex or real matrix. 

\begin{definition}
We say a sequence $\Psi=\lbrace \psi_k\rbrace_{k=1}^{\infty}$ is an $E$-sequence in $\mathcal{H}$ if, its $E$-transform, i.e. the sequence 
\begin{equation}\label{eq8}
E\Psi=\left\lbrace\sum_{k=1}^{\infty}E_{n,k}\psi_k\right\rbrace_{n=1}^{\infty},
\end{equation}
is a well defined sequence in $\mathcal{H}$, that is, the series in \eqref{eq8} converges in $\mathcal{H}$ for all
 $n\in\mathbb{N}$.
\end{definition}
\begin{lemma}\label{lem1}
Suppose that $E$ is invertible and $\Psi=\lbrace \psi_k\rbrace_{k=1}^{\infty}$ is a sequence in $\mathcal{H}$. Then
\begin{itemize}
\item[$(i)$] If $\Psi$ is an $E$-sequence then $E\Psi$ is an $E^{-1}$-sequence.
\item[$(ii)$] If $\Psi$ is an $E^{-1}$-sequence then $E^{-1}\Psi$ is an $E$-sequence.
\end{itemize}
\end{lemma}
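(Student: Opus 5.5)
The plan is to exploit the identity $E^{-1}E=I=EE^{-1}$ at the level of matrix entries. The core of (i) is to show that the $E^{-1}$-transform of $\Phi:=E\Psi$ reproduces $\Psi$ itself, which in particular converges. Part (ii) is then the same argument with the roles of $E$ and $E^{-1}$ exchanged, since $(E^{-1})^{-1}=E$. So only (i) needs to be argued in detail.

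For (i), write $\varphi_n=\sum_{k=1}^{\infty}E_{n,k}\psi_k$. This series converges in $\mathcal H$ for every $n$ by the assumption that $\Psi$ is an $E$-sequence, so $\Phi=\{\varphi_n\}_{n=1}^{\infty}$ is a well defined sequence in $\mathcal H$. Fix $j\in\mathbb N$. We must show that $\sum_{n=1}^{\infty}(E^{-1})_{j,n}\varphi_n$ converges in $\mathcal H$. Formally,
$$\sum_{n=1}^{\infty}(E^{-1})_{j,n}\varphi_n=\sum_{n=1}^{\infty}\sum_{k=1}^{\infty}(E^{-1})_{j,n}E_{n,k}\psi_k=\sum_{k=1}^{\infty}\Big(\sum_{n=1}^{\infty}(E^{-1})_{j,n}E_{n,k}\Big)\psi_k=\sum_{k=1}^{\infty}\delta_{j,k}\psi_k=\psi_j .$$
So the key steps, in order, are:
\begin{enumerate}
\item expand $\varphi_n$ using its defining series;
\item interchange the order of the two summations;
\item use $(E^{-1}E)_{j,k}=\delta_{j,k}$ to collapse the inner sum;
\item conclude that the series defining $(E^{-1}\Phi)_j$ converges, with value $\psi_j$, so $E^{-1}(E\Psi)=\Psi$.
\end{enumerate}

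The main obstacle is step 2: interchanging the two infinite sums for vector-valued series that need not converge absolutely. I would handle it by using the standing interpretation of invertibility, namely that $E$ acts as a bijective matrix mapping on the sequence space, with inverse mapping given by the matrix $E^{-1}$, so that $E^{-1}(Ex)=x$ holds entrywise as iterated series. First I would test against an arbitrary $g\in\mathcal H$. This reduces the claim to the scalar sequences $x_k=\langle \psi_k,g\rangle$, where $\langle\varphi_n,g\rangle=\sum_k E_{n,k}x_k$ and the identity $E^{-1}(Ex)=x$ gives weak convergence of the iterated sum to $\psi_j$. Next I would upgrade this to norm convergence of the partial sums $\sum_{n\le N}(E^{-1})_{j,n}\varphi_n$. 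For that, I would show these partial sums are Cauchy by applying the same scalar identity uniformly over $\|g\|\le 1$, for instance via a uniform boundedness argument on the functionals $g\mapsto\sum_{n\le N}(E^{-1})_{j,n}\langle\varphi_n,g\rangle$. If absolute convergence $\sum_{n,k}|(E^{-1})_{j,n}E_{n,k}|\,\|\psi_k\|<\infty$ happens to be available, a direct Fubini argument would suffice instead.

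Part (ii) then follows verbatim. If $\Psi$ is an $E^{-1}$-sequence, set $\Phi=E^{-1}\Psi$ and run the same computation with $EE^{-1}=I$. This shows that $\sum_n E_{j,n}\varphi_n$ converges to $\psi_j$, so $E^{-1}\Psi$ is an $E$-sequence.
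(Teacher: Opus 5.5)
Your steps 1--4 are exactly the paper's argument. You are right that step 2 is where everything hinges: the paper performs that interchange citing only $E^{-1}E=I$ entrywise, with no justification. Your repair, however, does not close the gap. In this paper ``invertible'' means only that $E$ has an algebraic two-sided inverse, $\sum_n E^{-1}_{j,n}E_{n,k}=\delta_{j,k}=\sum_n E_{j,n}E^{-1}_{n,k}$, and $\Psi$ is an arbitrary sequence in $\mathcal{H}$. The identity you want to invoke, $E^{-1}(Ex)=x$ entrywise as iterated series for the scalar sequences $x_k=\langle\psi_k,g\rangle$, is precisely the scalar case of the lemma, so using it is circular. Indeed, in the paper the fact that $E^{-1}$ represents the inverse operator (Corollary \ref{col1}, Theorem \ref{th4}) is itself deduced from this lemma. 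Moreover, for an arbitrary $E$-sequence there is no reason for $\{\langle\psi_k,g\rangle\}_k$ to lie in any space on which such an identity is known. The weak-to-norm upgrade also fails as described. The uniform boundedness principle yields only $\sup_N\|s_N\|<\infty$ for $s_N=\sum_{n\le N}E^{-1}_{j,n}\varphi_n$, and weakly convergent partial sums need not be norm Cauchy: for an orthonormal sequence $\{e_n\}$, the partial sums $e_{N+1}-e_1$ of $\sum_n(e_{n+1}-e_n)$ converge weakly to $-e_1$ but not in norm.

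More fundamentally, step 2 cannot be justified under the stated hypotheses, because infinite matrix products are not associative and the lemma, as stated, is false. Take $E_{n,n}=1$, $E_{n,n+1}=-\tfrac12$, and all other entries $0$. This is even a bounded bijection of $\ell^2(\mathbb{N})$ whose inverse is given by the matrix $E^{-1}_{n,k}=2^{n-k}$ for $k\ge n$ (and $0$ otherwise), so it satisfies your stronger reading of invertibility. Yet for $\psi_k=4^kf$ with $f\neq0$, $\Psi$ is an $E$-sequence with $(E\Psi)_n=-4^nf$, while $\sum_{k\ge n}2^{n-k}(-4^kf)$ diverges, so (i) fails. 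Since (ii) for the matrix $E^{-1}$ (whose inverse is $E$) is literally (i) for $E$, (ii) fails too. The identity $E^{-1}(E\Psi)=\Psi$ of your step 4 can fail even inside $\bigoplus_{n=1}^\infty\mathcal{H}$ when invertibility is only algebraic. The matrix $E_{n,n}=1$, $E_{n,n+1}=-2$ has two-sided inverse $E^{-1}_{n,k}=2^{k-n}$ ($k\ge n$), each entry of $EE^{-1}$ and $E^{-1}E$ being a sum of at most two nonzero terms. This $E$ is bounded and surjective on $\ell^2(\mathbb{N})$, since its adjoint $I-2S$ (with $S$ the forward shift) satisfies $\|(I-2S)c\|\ge\|c\|$. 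But $\psi_k=2^{-k}f$ gives $E\Psi=0$, hence $E^{-1}(E\Psi)=0\neq\Psi$. A correct version needs extra hypotheses that tame the double series. One option is that $E^{-1}$ is row-finite: then the outer sum is finite and the interchange is trivial, which gives (i) together with $E^{-1}(E\Psi)=\Psi$. Another is your absolute-convergence condition $\sum_{n,k}|E^{-1}_{j,n}E_{n,k}|\,\|\psi_k\|<\infty$.
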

\begin{proof}
$(i)$ Let $\Psi$ be an $E$-sequence. Then for each $n\in\mathbb{N}$
	\begin{align}\label{eq9}
		\left(E^{-1}\left\{\left(E\Psi\right)_j\right\}_{j=1}^\infty\right)_n&=\sum_{j=1}^\infty E^{-1}_{n,j}\left(E\Psi\right)_j\\&=\sum_{j=1}^\infty E^{-1}_{n,j}\sum_{k=1}^\infty E_{j,k}\psi_k=\sum_{j=1}^\infty\sum_{k=1}^\infty E^{-1}_{n,j}E_{j,k}\psi_k\nonumber.
	\end{align}
Using this fact that $E^{-1}$ is the algebraic inverse of $E$, for fixed $k\in\mathbb{N}$ we have
\begin{equation}
\sum_{j=1}^\infty E^{-1}_{n,j}E_{j,k}\psi_k=\begin{cases}
		0&n\neq k,\\\psi_k&n=k.\end{cases}
\end{equation}
Hence, \eqref{eq9} implies that $\left(E^{-1}\left\{\left(E\Psi\right)_j\right\}_{j=1}^\infty\right)_n=\psi_n$.
The proof of $(ii)$ is similar.
\end{proof}
\begin{lemma}\label{lem2}
Assume that $\mathcal{H}$ is a nonzero Hilbert space. If $E$ is a matrix mapping on $\bigoplus_{n=1}^\infty\mathcal{H}$, then it is a matrix mapping on $\ell^2(\mathbb{N})$.
\end{lemma}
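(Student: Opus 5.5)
Fix a unit vector $h\in\mathcal{H}$; such a vector exists because $\mathcal{H}\neq\{0\}$. The plan is to embed $\ell^2(\mathbb{N})$ isometrically into $\bigoplus_{n=1}^\infty\mathcal{H}$ via
$$J:\ell^2(\mathbb{N})\to\bigoplus_{n=1}^\infty\mathcal{H},\qquad J\{x_k\}_{k=1}^\infty=\{x_kh\}_{k=1}^\infty .$$
We then use the hypothesis on $E$ for the sequence $J x$ and pull the conclusion back to scalars by taking inner products with $h$. First note that $J$ is well defined and isometric, since $\sum_k\|x_kh\|^2=\sum_k|x_k|^2<\infty$.

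Now let $x=\{x_k\}_{k=1}^\infty\in\ell^2(\mathbb{N})$. Because $E$ is a matrix mapping on $\bigoplus_{n=1}^\infty\mathcal{H}$, for every $n$ the series $\sum_{k}E_{n,k}x_kh$ converges in $\mathcal{H}$. Moreover, the resulting sequence $\{g_n\}_{n=1}^\infty$, where $g_n=\sum_kE_{n,k}x_kh$, lies in $\bigoplus_{n=1}^\infty\mathcal{H}$.

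The partial sums of this series are $\big(\sum_{k=1}^m E_{n,k}x_k\big)h$. Since $\|h\|=1$, the norm of the difference of two such partial sums equals the absolute value of the difference of the corresponding scalar partial sums. Hence the scalar partial sums form a Cauchy sequence in $\mathbb{C}$, so $(Ex)_n=\sum_kE_{n,k}x_k$ converges. Alternatively, apply the continuous functional $\langle\cdot,h\rangle$ to the convergent series. Either way we obtain $g_n=(Ex)_nh$.

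Finally,
$$\sum_n|(Ex)_n|^2=\sum_n\|(Ex)_nh\|^2=\sum_n\|g_n\|^2<\infty,$$
so $Ex\in\ell^2(\mathbb{N})$. Thus $E$ is a matrix mapping on $\ell^2(\mathbb{N})$; in operator form this says $EJ=JE$. There is no real obstacle here. The only point needing care is that convergence of the vector series implies convergence of the scalar series, and that step is exactly where the hypothesis that $\mathcal{H}$ is nonzero (that is, the choice of $h\neq0$) is used.
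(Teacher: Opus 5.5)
Your proposal is correct and follows essentially the same route as the paper: embed $\ell^2(\mathbb{N})$ into $\bigoplus_{n=1}^\infty\mathcal{H}$ via $x\mapsto\{x_k h\}_{k=1}^\infty$ for a fixed nonzero vector, apply the hypothesis on $E$, and read off $\sum_n|(Ex)_n|^2<\infty$ from $\sum_n\|(Ex)_n h\|^2<\infty$. You also check explicitly that convergence of the vector series $\sum_k E_{n,k}x_k h$ forces convergence of the scalar series $\sum_k E_{n,k}x_k$, a step the paper's proof uses only implicitly, so your version is slightly more complete.
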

\begin{proof}
If $\mathcal{H}=\mathbb{C}$ the result is trivial. Let $\mathcal{H}$ be any other nonzero Hilbert space and 
suppose that $\left\{c_k\right\}_{k=1}^\infty$ is an arbitrary sequence in $\ell^2(\mathbb{N})$. Let $f$ be a nonzero vector in $\mathcal{H}$. define a sequence $\Psi=\left\{\psi_k\right\}_{k=1}^\infty$ where $\psi_k=c_kf$ for all $k\in\mathbb{N}$. $\Psi\in\bigoplus_{n=1}^\infty\mathcal{H}$ since,
\begin{align*}
\sum_{k=1}^\infty\Vert\psi_k\Vert^2=\sum_{k=1}^\infty\Vert c_kf\Vert^2=\sum_{k=1}^\infty\vert c_k\vert^2\Vert f\Vert^2=\Vert f\Vert^2\sum_{k=1}^\infty\vert c_k\vert^2<\infty.
\end{align*}
Hence $E\Psi\in\bigoplus_{n=1}^\infty\mathcal{H}$ by assumption, that is, $\sum_{n=1}^\infty\left\Vert\left(E\Psi\right)_n\right\Vert^2<\infty.$ So we have
\begin{align}\label{eq8.1}
\sum_{n=1}^\infty\left\Vert\left(E\Psi\right)_n\right\Vert^2=\sum_{n=1}^\infty\left\Vert\sum_{k=1}^\infty E_{n,k}\psi_k\right\Vert^2=\Vert f\Vert^2\sum_{n=1}^\infty\left\vert\sum_{k=1}^\infty E_{n,k}c_k\right\vert^2<\infty.
\end{align}
Since $f$ is a nonzero vector, \eqref{eq8.1} implies that $\sum_{n=1}^\infty\left\vert\sum_{k=1}^\infty E_{n,k}c_k\right\vert^2<\infty.$ This exactly means that $E:\ell^2(\mathbb{N})\to \ell^2(\mathbb{N})$ is well-defined.
\end{proof}
\begin{theorem}\label{th1}
Let $E$ defines a matrix mapping on $\bigoplus_{n=1}^\infty\mathcal{H}$. Then $E$ induces a bounded operator on the Hilbert space $\bigoplus_{n=1}^\infty\mathcal{H}$.
\end{theorem}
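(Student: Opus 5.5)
The natural route is the closed graph theorem. Write $\mathcal{K}=\bigoplus_{n=1}^\infty\mathcal{H}$. By hypothesis, for every $\Psi=\{\psi_k\}_{k=1}^\infty\in\mathcal{K}$ each series $\sum_{k}E_{n,k}\psi_k$ converges in $\mathcal{H}$, and the resulting sequence $E\Psi$ lies again in $\mathcal{K}$. So $E:\mathcal{K}\to\mathcal{K}$ is an everywhere defined map. It is clearly linear, since each coordinate $(E\Psi)_n$ is a convergent series depending linearly on $\Psi$. Because $\mathcal{K}$ is a Hilbert space, hence a Banach space, it will suffice to show that the graph of $E$ is closed.

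The first step is to show that, for each fixed $n$, the coordinate map $P_n:\Psi\mapsto(E\Psi)_n$ is a bounded operator $\mathcal{K}\to\mathcal{H}$. Consider the partial sums $P_n^{(m)}\Psi=\sum_{k=1}^m E_{n,k}\psi_k$. Each of these is bounded, with
$$\|P_n^{(m)}\|\le\Big(\sum_{k=1}^m|E_{n,k}|^2\Big)^{1/2}$$
by Cauchy--Schwarz. By hypothesis, $P_n^{(m)}\Psi\to P_n\Psi$ for every $\Psi\in\mathcal{K}$. The uniform boundedness principle then gives $\sup_m\|P_n^{(m)}\|<\infty$, and hence $P_n$ is bounded.

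Alternatively, one can get this more concretely. Fix a unit vector $f\in\mathcal{H}$ and take $\psi_k=c_kf$; as in Lemma \ref{lem2}, convergence of $\sum_k E_{n,k}c_k$ for all $c\in\ell^2(\mathbb{N})$ follows. The classical result (again via Banach--Steinhaus, or a direct gliding hump argument) then gives $\{E_{n,k}\}_{k}\in\ell^2(\mathbb{N})$ for each $n$. Hence
$$\|(E\Psi)_n\|\le\Big(\sum_k|E_{n,k}|^2\Big)^{1/2}\|\Psi\|.$$
Obtaining this row-wise boundedness is the step I expect to need the most care, since it is where the mere convergence hypothesis is converted into a quantitative bound.

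With this in hand, closedness of the graph is routine. Suppose $\Psi^{(j)}\to\Psi$ and $E\Psi^{(j)}\to\Phi$ in $\mathcal{K}$. Convergence in $\mathcal{K}$ implies coordinatewise convergence, so $(E\Psi^{(j)})_n\to\Phi_n$ for every $n$. On the other hand, continuity of $P_n$ gives $(E\Psi^{(j)})_n\to(E\Psi)_n$. Therefore $\Phi_n=(E\Psi)_n$ for all $n$, that is, $\Phi=E\Psi$. The closed graph theorem now yields that $E$ is a bounded operator on $\bigoplus_{n=1}^\infty\mathcal{H}$.
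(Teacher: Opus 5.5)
Your proof is correct and follows essentially the same route as the paper: you bound the partial-sum maps $\Psi\mapsto\sum_{k=1}^m E_{n,k}\psi_k$ by Cauchy--Schwarz, apply Banach--Steinhaus to get boundedness of each coordinate map, and then check closedness of the graph coordinatewise before invoking the closed graph theorem. Your alternative derivation of the row-wise bound, via $\{E_{n,k}\}_{k}\in\ell^2(\mathbb{N})$ using $\psi_k=c_kf$, is also valid but not needed.
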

\begin{proof}
Consider the following mapping on $\bigoplus_{n=1}^\infty\mathcal{H}$.
\begin{equation}\label{eqT}
T:\bigoplus_{n=1}^\infty\mathcal{H}\longrightarrow\bigoplus_{n=1}^\infty\mathcal{H}\qquad;\qquad T\Psi=\left\{\sum_{k=1}^\infty E_{n,k}\psi_k\right\}_{n=1}^\infty,
\end{equation}
for all $\Psi=\left\{\psi_k\right\}_{k=1}^\infty\in\bigoplus_{n=1}^\infty\mathcal{H}$.
The hypothesis says that $T\Psi\in\bigoplus_{n=1}^\infty\mathcal{H}$ for all $\psi\in\bigoplus_{n=1}^\infty\mathcal{H}$. So $T$ is well defined. For a fixed $n\in\mathbb{N}$, suppose that $T_n:\bigoplus_{n=1}^\infty\mathcal{H}\longrightarrow\mathcal{H}$ is the $n$th coordinate functional of $T$ defined by $T_n\Psi=(T\Psi)_n$.

\textbf{Step1}: First we show that $T_n$ is linear and bounded for all $n\in\mathbb{N}$. For this purpose, we use partial sums of series in \eqref{eqT} to define the linear operators
\begin{equation}\label{eqTn}
T_n^{(m)}:\bigoplus_{n=1}^\infty\mathcal{H}\longrightarrow\mathcal{H}\qquad;\qquad T_n^{(m)}\Psi=\sum_{k=1}^m E_{n,k}\psi_k.
\end{equation}
we claim that $T_n^{(m)}$ is bounded for all $m\in\mathbb{N}$. In fact, for all $\Psi\in\bigoplus_{n=1}^\infty\mathcal{H}$,
\begin{align*}
\left\Vert T_n^{(m)}\Psi\right\Vert_{\mathcal{H}}^2=\left\Vert\sum_{k=1}^m E_{n,k}\psi_k\right\Vert_{\mathcal{H}}^2&\leq\left(\sum_{k=1}^m\left\vert E_{n,k}\right\vert\left\Vert\psi_k\right\Vert\right)^2\\&\leq\sum_{k=1}^m \left\vert E_{n,k}\right\vert^2\sum_{k=1}^m\left\Vert\psi_k\right\Vert^2\leq\sum_{k=1}^m \left\vert E_{n,k}\right\vert^2\left\Vert\Psi\right\Vert_{\bigoplus\mathcal{H}}^2, 
\end{align*}
and therefore $\left\Vert T_n^{(m)}\right\Vert\leq\left(\sum_{k=1}^m \left\vert E_{n,k}\right\vert^2\right)^{\frac{1}{2}}<\infty.$

Moreover, for fixed $\Psi\in\bigoplus_{n=1}^\infty\mathcal{H}$, 
\begin{equation*}
\lim_{m\to\infty}T_n^m\Psi=\lim_{m\to\infty}\sum_{k=1}^m E_{n,k}\psi_k=\sum_{k=1}^\infty E_{n,k}\psi_k=T_n\Psi.
\end{equation*}
Therefore $\left\{T_n^m\right\}_{m=1}^\infty$ is a sequence of bounded and linear operators between Banach spaces $\bigoplus_{n=1}^\infty\mathcal{H}$ and $\mathcal{H}$ which converges pointwise to the map $T_n:\bigoplus_{n=1}^\infty\mathcal{H}\longrightarrow\mathcal{H}$. In fact $\sup_m\|T_n^m\Psi\|<\infty$, hence the Banach–Steinhaus Theorem shows that $T_n$ is linear and bounded. This also leads to linearity of $T$.

\textbf{Step2:} Now we are going to show that $T$ is bounded. Assume that $\left\{\Psi^l\right\}_{l=1}^\infty$ is a sequence in $\bigoplus_{n=1}^\infty\mathcal{H}$ such that $\Psi^l\xrightarrow{\Vert.\Vert_{\bigoplus\mathcal{H}}}\Psi$ and $T\Psi^l\xrightarrow{\Vert.\Vert_{\bigoplus\mathcal{H}}}\Phi$ for some $\Phi=\left\{\phi_k\right\}_{k=1}^\infty\in\bigoplus_{n=1}^\infty\mathcal{H}$.Since both spaces are Banach spaces, it suffices to show that the graph of $T$ is closed. We show this is true. Indeed, since $T\Psi^l\xrightarrow{\Vert.\Vert_{\bigoplus\mathcal{H}}}\Phi$, hence
\begin{align}\label{eq3}
\sum_{n=1}^\infty\left\Vert\left(T\Psi^l\right)_n-\phi_n\right\Vert_{\mathcal{H}}^2=\left\Vert T\Psi^l-\Phi\right\Vert_{\bigoplus\mathcal{H}}^2\longrightarrow0,
\end{align} 
as $l$ tends to infinity. For fixed $n\in\mathbb{N}$, as $\left\Vert\left(T\Psi^l\right)_n-\phi_n\right\Vert_{\mathcal{H}}^2\leq\left\Vert T\Psi^l-\Phi\right\Vert_{\bigoplus\mathcal{H}}^2$, \eqref{eq3} implies that 
\begin{align}\label{eq5}
lim_{l\to\infty}\left(T\Psi^l\right)_n=\phi_n.
\end{align}
On the other hand, By first part of the proof, $T_n$ is continuous. Hence, because $\Psi^l\xrightarrow{\Vert.\Vert_{\bigoplus\mathcal{H}}}\Psi$, we have 
\begin{align}\label{eq6}
lim_{\l\to\infty}T_n\Psi^l=T_n\Psi.
\end{align}
But $T_n\Psi^l=\left(T\Psi^l\right)_n$ for each $l\in\mathbb{N}$, by definition. Now, by \eqref{eq5} and using the uniqueness of limits in the Banach space $\mathcal{H}$, we obtain
\begin{align}\label{eq7}
\left(T\Psi\right)_n=T_n\Psi=\lim_{l\to\infty}T_n\Psi^l=\lim_{l\to\infty}\left(T\Psi^l\right)_n=\phi_n.
\end{align}
Since we had fixed $n\in\mathbb{N}$, \eqref{eq7} implies that $T\Psi=\Phi$ and the proof is complete.
\end{proof}
\begin{theorem}\label{th5}
Let $E$ be a matrix mapping on $\ell^2(\mathbb{N})$. Then $E$ induces a bounded operator on $\ell^2(\mathbb{N})$.
\end{theorem}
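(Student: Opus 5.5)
The quickest route is to apply Theorem~\ref{th1} with $\mathcal{H}=\mathbb{C}$. In that case $\bigoplus_{n=1}^\infty\mathbb{C}$ is, as a Hilbert space, exactly $\ell^2(\mathbb{N})$: the underlying sets coincide and the inner products $\sum_n c_n\overline{d_n}$ agree. So a matrix mapping on $\ell^2(\mathbb{N})$ is literally a matrix mapping on $\bigoplus_{n=1}^\infty\mathbb{C}$. Nothing in the proof of Theorem~\ref{th1} used anything about $\mathcal{H}$ beyond its being a Hilbert (indeed Banach) space, so it yields a bounded linear operator $T$ with $(Tc)_n=\sum_k E_{n,k}c_k$. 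I would check that this specialization is legitimate, and in case a reader objects, also record the direct argument below.

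\textbf{Step 1 (rows).} For fixed $n$, the functional $T_n c=\sum_{k}E_{n,k}c_k$ is defined on all of $\ell^2(\mathbb{N})$. It is the pointwise limit of the bounded partial-sum functionals $T_n^{(m)}c=\sum_{k=1}^mE_{n,k}c_k$, which satisfy $\|T_n^{(m)}\|\le(\sum_{k\le m}|E_{n,k}|^2)^{1/2}$. The Banach--Steinhaus theorem then makes $T_n$ bounded and linear. As a byproduct, each row $\{E_{n,k}\}_k$ lies in $\ell^2$, with norm equal to $\|T_n\|$, by the Riesz representation theorem.

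\textbf{Step 2 (closed graph).} Suppose $c^l\to c$ and $Tc^l\to d$ in $\ell^2$. Coordinatewise convergence gives $(Tc^l)_n\to d_n$. Continuity of $T_n$ gives $(Tc^l)_n=T_nc^l\to T_nc=(Tc)_n$. Hence $Tc=d$, so the graph is closed, and the closed graph theorem gives boundedness of $T$.

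The only point needing care is the identification $\bigoplus\mathbb{C}=\ell^2(\mathbb{N})$ together with the linearity of $T$. Linearity follows from the linearity of each convergent row series, exactly as in Step~1 of Theorem~\ref{th1}. I do not expect a genuine obstacle.
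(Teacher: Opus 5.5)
Your proposal is correct and is essentially the paper's proof. The paper bounds each row functional $U_n$ by applying Banach--Steinhaus to the partial sums $U_n^m$, then concludes with the closed graph theorem. That is exactly the argument of Theorem~\ref{th1} with $\mathcal{H}=\mathbb{C}$, which is the specialization you propose. Your side remark that the rows lie in $\ell^2(\mathbb{N})$ is correct but not needed here; the paper proves it separately in Lemma~\ref{lem3}.
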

\begin{proof}
Suppose $U:\ell^2(\mathbb{N})\longrightarrow\ell^2(\mathbb{N})$ is a mapping which is defined by
\begin{align*}
Uc=\left\{\sum_{k=1}^\infty E_{n,k}c_k\right\}_{n=1}^\infty\qquad\left(c=\left\{c_k\right\}_{k=1}^\infty\in\ell^2(\mathbb{N})\right).
\end{align*}
Since $E$ is a matrix mapping on $\ell^2(\mathbb{N})$, hence $U$ is well defined. To reach our desired result, namely the boundedness and the linearity of $U$, we first show that for each $n\in\mathbb{N}$, the coordinate functional $U_n:\ell^2(\mathbb{N})\longrightarrow\mathbb{C}$, defined by $U_nc=\left(Uc\right)_n$ with $c\in\ell^2(\mathbb{N})$, is linear and bounded. To this purpose, fix $n\in\mathbb{N}$ and for each $m\in\mathbb{N}$ define the linear operator $U_n^m:\ell^2(\mathbb{N})\longrightarrow\mathbb{C}$ by 
\begin{align*}
U_n^mc=\sum_{k=1}^m E_{n,k}c_k\qquad\left(c=\left\{c_k\right\}_{k=1}^\infty\in\ell^2(\mathbb{N})\right).
\end{align*}
For each $m\in\mathbb{N}$, $U_n^m$ is bounded. In fact, for each $c=\left\{c_k\right\}_{k=1}\in\ell^2(\mathbb{N})$,
\begin{align*}
\left|U_n^mc\right|^2=\left|\sum_{k=1}^mE_{n,k}c_k\right|^2\leq\sum_{k=1}^m\left|E_{n,k}\right|^2\sum_{k=1}^m\left|c_k\right|^2\leq\sum_{k=1}^m\left|E_{n,k}\right|^2\left\Vert c\right\Vert_{\ell^2}^2.
\end{align*}
Thus $\left\Vert U_n^m\right\Vert\leq\left(\sum_{k=1}^m\left|E_{n,k}\right|^2\right)^{\frac{1}{2}}<\infty.$

On the other hand, 
\begin{align*}
\lim_{m\to\infty}U_n^mc=\lim_{m\to\infty}\sum_{k=1}^mE_{n,k}c_k=\sum_{k=1}^\infty E_{n,k}c_k=U_nc.
\end{align*}
In this way, we showed that $\left\{U_n^m\right\}_{m=1}^\infty$ is a sequence of bounded and linear functionals on $\ell^2(\mathbb{N})$ which converges pointwise to the mapping $U_n$. Now we apply the Banach-Steinhaus Theorem to prove that $U_n$ is linear and bounded and so the linearity of $U$ is obtained.

As the last part of the proof, assume that $\left\{c^i\right\}_{l=1}^\infty$ is a sequence in $\ell^2(\mathbb{N})$ such that $c^i\xrightarrow{\Vert.\Vert_{\ell^2}}c$ and $Uc^i\xrightarrow{\Vert.\Vert_{\ell^2}}d$, for some $c,d\in\ell^2(\mathbb{N})$. We show $Uc=d$. Indeed, for fixed $n\in\mathbb{N}$ we have
\begin{align*}
\left|\left(Uc^i\right)_n-d_n\right|^2\leq\sum_{k=1}^\infty\left|\left(Uc^i\right)_k-d_k\right|^2=\left\Vert Uc^i-d\right\Vert_{\ell^2}^2\longrightarrow0.
\end{align*}
Hence
\begin{align}\label{eq11}
\lim_{i\to\infty}\left(Uc^i\right)_n=d_n.
\end{align}
On the other hand, we have showed that $U_n$ is a bounded linear operator on $\ell^2(\mathbb{N})$ for each $n\in\mathbb{N}$. This leads to
\begin{align}\label{eq12}
\lim_{i\to\infty}U_nc^i=U_n\left(\lim_{i\to\infty}c^i\right)=U_nc.
\end{align}
But $U_nc=\left(Uc\right)_n$ by definition. Therefore \eqref{eq11} and \eqref{eq12} imply that
\begin{align*}
d_n=\lim_{i\to\infty}\left(Uc^i\right)_n=\lim_{i\to\infty}U_nc^i=U_nc=(Uc)_n,
\end{align*}
and this means that $Uc=d$. Now an application of the Closed Graph Theorem proves that $U$ is bounded.
\end{proof}
\begin{corollary}\label{col1}
Let $E$ be a matrix with algebraic inverse $E^{-1}$. Suppose that $E$ defines a matrix mapping on $\bigoplus_{n=1}^\infty\mathcal{H}$. If $E$ acts on $\bigoplus_{n=1}^\infty\mathcal{H}$ surjectively, then it induces a bounded bijection T on $\bigoplus_{n=1}^\infty\mathcal{H}$, where $T^{-1}$ is bounded and represents by $E^{-1}$.
\end{corollary}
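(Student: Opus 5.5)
By Theorem~\ref{th1}, the operator $T$ defined in \eqref{eqT} is a bounded linear operator on $\bigoplus_{n=1}^\infty\mathcal{H}$. Surjectivity of the action of $E$ means that $T$ is onto. The plan has three steps: show that $T$ is injective, invoke the bounded inverse theorem, and then identify $T^{-1}$ with the matrix $E^{-1}$.

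\textbf{Injectivity.} Suppose $T\Psi=0$ for some $\Psi=\{\psi_k\}_{k=1}^\infty\in\bigoplus_{n=1}^\infty\mathcal{H}$. Then $\Psi$ is an $E$-sequence, because every row series of $E$ converges on it. By Lemma~\ref{lem1}$(i)$, $E\Psi$ is an $E^{-1}$-sequence and $E^{-1}(E\Psi)=\Psi$. Since $E\Psi=T\Psi=0$, applying $E^{-1}$ termwise gives $\Psi=0$. Once injectivity holds, $T$ is a bounded bijection between Banach spaces, and the open mapping (bounded inverse) theorem shows that $T^{-1}$ is bounded.

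\textbf{Representation by $E^{-1}$.} Take $\Phi\in\bigoplus_{n=1}^\infty\mathcal{H}$ and put $\Psi=T^{-1}\Phi$, so that $\Phi=E\Psi$. Lemma~\ref{lem1}$(i)$ applied to $\Psi$ gives that $\Phi$ is an $E^{-1}$-sequence with $E^{-1}\Phi=\Psi$. Concretely, for every $n$ the series $\sum_{j}E^{-1}_{n,j}\phi_j$ converges and equals $\psi_n=(T^{-1}\Phi)_n$. Hence $E^{-1}$ defines a matrix mapping on $\bigoplus_{n=1}^\infty\mathcal{H}$, and the operator it induces is exactly $T^{-1}$. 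Theorem~\ref{th1} applied to $E^{-1}$ gives boundedness of $T^{-1}$ a second time, independently of the open mapping theorem.

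\textbf{Main obstacle.} Everything depends on the interchange of summations inside Lemma~\ref{lem1}, namely
\[
\sum_j E^{-1}_{n,j}\sum_k E_{j,k}\psi_k=\sum_k\Big(\sum_j E^{-1}_{n,j}E_{j,k}\Big)\psi_k .
\]
For arbitrary infinite matrices this interchange can fail, and associativity of infinite matrix products is not automatic. I will take Lemma~\ref{lem1} as given, as the paper permits. If a more careful argument is needed, I would first justify the interchange for finitely supported $\Psi$, where the inner sum is finite. I would then extend to general $\Psi$ using the continuity of $T$ and of the coordinate functionals $\Phi\mapsto\sum_j E^{-1}_{n,j}\phi_j$, which is the same Banach--Steinhaus argument as in Step~1 of Theorem~\ref{th1}. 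This extension step needs the row series of $E^{-1}$ to converge on the range of $T$, and I expect it to be the hardest point to make rigorous.
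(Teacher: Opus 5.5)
Your outline is exactly the paper's proof: Theorem~\ref{th1}, then injectivity via Lemma~\ref{lem1}, then the open mapping theorem, then Lemma~\ref{lem1} again to identify $T^{-1}$ with $E^{-1}$. Like the paper's proof, it stands or falls with the identity $E^{-1}(E\Psi)=\Psi$. The interchange you flagged is not a technicality: the identity is false in general, and the corollary fails as stated. Take $\mathcal{H}\neq\{0\}$ and let $E_{n,n}=1$, $E_{n,n+1}=-2$, all other entries $0$. Let $E^{-1}_{n,k}=2^{k-n}$ for $k\geq n$ and $0$ otherwise. Every entry of $EE^{-1}$ and of $E^{-1}E$ is a finite sum, and both products equal $I$, so $E^{-1}$ is a two-sided algebraic inverse. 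The map $(E\Psi)_n=\psi_n-2\psi_{n+1}$ is a matrix mapping on $\bigoplus_{n=1}^\infty\mathcal{H}$, and it is surjective. Indeed, for given $\Phi$ the sequence $\psi_n=-\sum_{j=1}^{n-1}2^{j-n}\phi_j$ solves $E\Psi=\Phi$, and $\|\Psi\|\leq\|\Phi\|$ because it is a convolution with the summable kernel $\{2^{-m}\}_{m\geq1}$. Yet for $f\neq0$ the nonzero element $\Psi=\{2^{1-k}f\}_{k=1}^\infty$ satisfies $E\Psi=0$, so $T$ is not injective. In Lemma~\ref{lem1} this is the failure $\sum_j E^{-1}_{n,j}\sum_k E_{j,k}\psi_k=0\neq\psi_n=\sum_k\bigl(\sum_j E^{-1}_{n,j}E_{j,k}\bigr)\psi_k$; the double series is not absolutely convergent. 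Moreover, the rows of $E^{-1}$ are unbounded, so $E^{-1}$ does not even define a matrix mapping on $\bigoplus_{n=1}^\infty\mathcal{H}$.

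Your fallback argument breaks exactly where you predicted. Since $T$ is onto, asking that $\sum_j E^{-1}_{n,j}\phi_j$ converge on the range of $T$ means asking it for every $\Phi\in\bigoplus_{n=1}^\infty\mathcal{H}$. Testing on $\phi_j=c_jf$ and applying Banach--Steinhaus shows this is equivalent to each row $\{E^{-1}_{n,j}\}_{j=1}^\infty$ lying in $\ell^2(\mathbb{N})$, and the hypotheses do not give this. If you add that hypothesis, your repair becomes a complete proof. For fixed $n$, the map $\Psi\mapsto\sum_j E^{-1}_{n,j}(T\Psi)_j$ is continuous, by Cauchy--Schwarz and the boundedness of $T$. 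For $\Psi$ supported in $\{1,\dots,K\}$ it equals $\sum_{k=1}^K\bigl(\sum_j E^{-1}_{n,j}E_{j,k}\bigr)\psi_k=\psi_n$, since only finitely many convergent series are being added. By density it equals $\psi_n$ for every $\Psi$. This gives injectivity of $T$, boundedness of $T^{-1}$ by the open mapping theorem, and $T^{-1}\Phi=E^{-1}\Phi$ for all $\Phi$. Without such an extra condition no proof of the statement is possible.
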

\begin{proof}
By Theorem \ref{th1}, $E$ induces a bounded linear operator $T$ on $\bigoplus_{n=1}^\infty\mathcal{H}$. We show $T$ is surjective and injective. 

The surjectivity of $T$ is straightly concluded by assumption, that is, for each $\Psi\in\bigoplus_{n=1}^\infty\mathcal{H}$, there exists $\Phi\in\bigoplus_{n=1}^\infty\mathcal{H}$ such that $E\Phi=\Psi$ which is equivalent to $T\Phi=\Psi$.

Now suppose that $T\Psi=0$ for some $\Psi\in\bigoplus_{n=1}^\infty\mathcal{H}$. We use Lemma \ref{lem1} to obtain
\begin{align}
\Psi=E^{-1}\left(E\Psi\right)=E^{-1}\left(T\Psi\right)=E^{-1}(0)=0,
\end{align}
and $T$ is therefore bijective. That $T^{-1}$ is bounded is concluded by applying the Open Mapping Theorem.

Finally, we show that for all $\Psi\in\bigoplus_{n=1}^\infty\mathcal{H}$, $T^{-1}\Psi=E^{-1}\Psi$. Indeed, for any $\Psi\in\bigoplus_{n=1}^\infty\mathcal{H}$ there is $\Phi\in\bigoplus_{n=1}^\infty\mathcal{H}$ such that $T\Phi=E\Phi=\Psi$. Hence, by Lemma \ref{lem1}
\begin{equation}
\Phi=E^{-1}\left(E\Phi\right)=E^{-1}\Psi.
\end{equation}
But $\Phi$ is exactly $T^{-1}\Psi$ and therefore $T^{-1}\Psi=E^{-1}\Psi$.
\end{proof}
\begin{theorem}\label{th4}
Let $E$ be an invertible matrix where defines a surjective matrix mapping on $\bigoplus_{n=1}^{\infty}\mathcal{H}$. Then $E$ induces a bounded bijection $U$ on $\ell^2(\mathbb{N})$ where $U^{-1}$ represents by $E^{-1}$. 
\end{theorem}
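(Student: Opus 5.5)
The plan is to transfer everything from $\bigoplus_{n=1}^\infty\mathcal{H}$ to $\ell^2(\mathbb{N})$ by fixing a unit vector $f\in\mathcal{H}$ (assuming $\mathcal{H}\neq\{0\}$) and identifying $c\in\ell^2(\mathbb{N})$ with $\Psi_c=\{c_kf\}_{k=1}^\infty\in\bigoplus_{n=1}^\infty\mathcal{H}$. This embedding is isometric. It also intertwines the matrix action: $E\Psi_c=\{(Ec)_nf\}_{n=1}^\infty$, and the same holds for $E^{-1}$.

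\textbf{Well-definedness and boundedness.} By Lemma \ref{lem2}, $E$ is a matrix mapping on $\ell^2(\mathbb{N})$. By Theorem \ref{th5}, it therefore induces a bounded linear operator $U$ on $\ell^2(\mathbb{N})$.

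\textbf{Surjectivity of $U$.} Let $d\in\ell^2(\mathbb{N})$. By Corollary \ref{col1}, $T^{-1}=E^{-1}$ on $\bigoplus_{n=1}^\infty\mathcal{H}$. Hence $E^{-1}$ is a matrix mapping on $\bigoplus_{n=1}^\infty\mathcal{H}$, and applying Lemma \ref{lem2} to $E^{-1}$ shows it is a matrix mapping on $\ell^2(\mathbb{N})$. Put $c=E^{-1}d\in\ell^2(\mathbb{N})$. Then
\[
\Psi_c=E^{-1}\Psi_d=T^{-1}\Psi_d,\qquad\text{so}\qquad \Psi_{Uc}=E\Psi_c=TT^{-1}\Psi_d=\Psi_d .
\]
Since $f\neq0$, this gives $Uc=d$.

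\textbf{Injectivity of $U$.} If $Uc=0$, then $T\Psi_c=\Psi_{Uc}=0$. Injectivity of $T$ from Corollary \ref{col1} gives $\Psi_c=0$, and hence $c=0$.

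\textbf{Bounded inverse represented by $E^{-1}$.} The open mapping theorem gives boundedness of $U^{-1}$. Alternatively, $E^{-1}$ is a matrix mapping on $\ell^2(\mathbb{N})$, so Theorem \ref{th5} applies to it directly. The surjectivity step already showed $U(E^{-1}d)=d$, so $U^{-1}d=E^{-1}d$.

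\textbf{Main obstacle.} The delicate point is justifying that $E^{-1}$ acts on $\ell^2(\mathbb{N})$, i.e.\ that the series $\sum_k E^{-1}_{n,k}d_k$ converge and define an $\ell^2$ sequence. I would handle this through the identification $E^{-1}\Psi_d=T^{-1}\Psi_d$ from Corollary \ref{col1}, which shows the series $\sum_k E^{-1}_{n,k}d_kf$ converge in $\mathcal{H}$. Because $f\neq0$, this is equivalent to convergence of the scalar series, and their square-summability follows from $\|T^{-1}\Psi_d\|<\infty$. One should also check, as in Lemma \ref{lem1}, that the algebraic identity $E(E^{-1}d)=d$ holds. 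This is the step where the interchange of summations must be controlled, and I would again obtain it by passing through $T$ and $T^{-1}$ rather than manipulating double series directly.
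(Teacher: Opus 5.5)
Your argument is correct, taking Corollary~\ref{col1} as given, but your surjectivity step differs from the paper's.

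The paper does not use the representation $T^{-1}=E^{-1}$ for surjectivity. It takes any preimage $\Phi=\{\phi_k\}_{k=1}^\infty$ of $\{c_kf\}_{k=1}^\infty$ under $T$, which exists by the surjectivity hypothesis alone. It then compresses $\Phi$ to the scalar sequence $d_k=\langle\phi_k,f\rangle/\|f\|^2$. Cauchy--Schwarz gives $d\in\ell^2(\mathbb{N})$, and continuity of the inner product gives $Ud=c$, even though $\Phi$ need not a priori have the form $\{d_kf\}_{k=1}^\infty$. Injectivity and the identity $U^{-1}=E^{-1}$ then come from Lemma~\ref{lem1} applied to scalar sequences, and boundedness of $U^{-1}$ comes from the open mapping theorem.

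You instead use $T^{-1}=E^{-1}$ to see that $E^{-1}$ is itself a matrix mapping on $\bigoplus_{n=1}^\infty\mathcal{H}$, hence on $\ell^2(\mathbb{N})$ by Lemma~\ref{lem2}. You then check, via the intertwining $E\Psi_c=\Psi_{Uc}$, that the preimage of $\Psi_d$ is exactly $\Psi_{E^{-1}d}$, and you transport injectivity from $T$ the same way. Your route gives $U^{-1}=E^{-1}$ immediately, and boundedness of $U^{-1}$ via Theorem~\ref{th5} applied to $E^{-1}$, with no open mapping theorem. The paper's route gives a surjectivity step that uses only the hypothesis.

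Your \emph{main obstacle} paragraph does not actually resolve the summation-interchange issue on its own terms. It delegates it to Corollary~\ref{col1}, and that corollary's proofs of $T^{-1}=E^{-1}$ and of the injectivity of $T$ are exactly the interchange in Lemma~\ref{lem1}. So your proof relies on Lemma~\ref{lem1} for surjectivity, injectivity and the identification of $U^{-1}$. The paper needs it only for injectivity and for identifying $U^{-1}$.
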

\begin{proof}
Lemma \ref{lem2} and Theorem \ref{th5} show that $E$ induces a bounded operator $U$ on $\ell^2(\mathbb{N})$ defined by
\begin{align*}
U\{c_k\}_{k=1}^\infty=\left\{\sum_{k=1}^\infty E_{n,k}c_k\right\}_{n=1}^\infty,\qquad\left(\{c_k\}_{k=1}^\infty\in\ell^2(\mathbb{N})\right).
\end{align*}
First we prove that $U$ is surjective on $\ell^2(\mathbb{N})$. To this end, let $c=\{c_k\}_{k=1}^\infty$ be a given sequence in $\ell^2(\mathbb{N})$. Without losing of generality, suppose that $\mathcal{H}$ is a nonzero Hilbert space and take a nonzero element $f\in\mathcal{H}$. Now define a sequence $\Psi=\{\psi_k\}_{k=1}^\infty$ in $\mathcal{H}$, by $\psi_k=c_kf(k\in\mathbb{N})$. We show that $\Psi\in\bigoplus_{n=1}^\infty\mathcal{H}$. In fact,
\begin{align*}
\sum_{k=1}^\infty\left\Vert\psi_k\right\Vert^2=\sum_{k=1}^\infty\left\Vert c_kf\right\Vert^2=\Vert f\Vert^2\sum_{k=1}^\infty\left\vert c_k\right\vert^2<\infty.
\end{align*}
By Corollary \ref{col1}, $E$ induces a bounded bijection $T$ on $\bigoplus_{n=1}^\infty\mathcal{H}$. Thus there exists some sequence $\Phi=\{\phi_k\}_{k=1}^\infty\in\bigoplus_{n=1}^\infty\mathcal{H}$ such that $T\Phi=\Psi$ or equivalently
\begin{align*}
\left(T\Phi\right)_n=\sum_{k=1}^\infty E_{n,k}\phi_k=\psi_n=c_nf,\quad(n\in\mathbb{N}).
\end{align*}
Now we define a scalar sequence $d=\{d_k\}_{k=1}^\infty$ by
\begin{align*}
d_k=\frac{\left\langle\phi_k,f\right\rangle}{\Vert f\Vert^2},\quad\left(k\in\mathbb{N}\right),
\end{align*}
where $\langle\;.\;\rangle$ is the inner product of $\mathcal{H}$ and $\Vert f\Vert\neq0$ by our assumption. We show $d\in\ell^2(\mathbb{N})$. Indeed,
\begin{align*}
\sum_{k=1}^\infty\left\vert d_k\right\vert^2=\sum_{k=1}^\infty\frac{\left\vert\left\langle\phi_k,f\right\rangle\right\vert^2}{\Vert f\Vert^4}=\frac{1}{\Vert f\Vert^4}\sum_{k=1}^\infty\left\vert\left\langle\phi_k,f\right\rangle\right\vert^2\leq\frac{1}{\Vert f\Vert^2}\sum_{k=1}^\infty\left\Vert\phi_k\right\Vert^2<\infty.
\end{align*}
As the final step, we show that $Ud=c$. In fact, for each $n\in\mathbb{N}$, 
\begin{align}\label{eq9.1}
(Ud)_n=\sum_{k=1}^\infty E_{n,k}d_k=\sum_{k=1}^\infty E_{n,k}\frac{\left\langle\phi_k,f\right\rangle}{\Vert f\Vert^2}.
\end{align}
Since $\sum_{k=1}^\infty E_{n,k}\phi_k<\infty$ by hypothesis and the inner product of $\mathcal{H}$ is continuous, \eqref{eq9.1} imply that 
\begin{align*}
(Ud)_n=\frac{\left\langle\sum_{k=1}^\infty E_{n,k}\phi_k,f\right\rangle}{\Vert f\Vert^2}=\frac{\left\langle c_nf,f\right\rangle}{\Vert f\Vert^2}=c_n\frac{\Vert f\Vert^2}{\Vert f\Vert^2}=c_n.
\end{align*}
This shows that $Ud=c$ and $U$ is therefore surjective. To prove the injectivity, suppose that $c$ is a sequence in $\ell^2(\mathbb{N})$ such that $Uc=0$. Then by definition of $U$ and Lemma \ref{lem1}
\begin{align*}
c=E^{-1}\left(Ec\right)=E^{-1}\left(Uc\right)=E^{-1}(0)=0.
\end{align*}
This shows that $U$ is a bounded linear bijection on the Banach space $\ell^2(\mathbb{N})$, and by an application of Open Mapping Theorem, $U^{-1}$ is bounded. Moreover for given sequence $c\in\ell^2(\mathbb{N})$, there exists $d\in\ell^2(\mathbb{N})$ such that $Ud=c$ or equivalently $d=U^{-1}c$. On the other hand $U^{-1}c=d=E^{-1}\left(Ed\right)=E^{-1}(Ud)=E^{-1}c$. Thus $U^{-1}c=E^{-1}c$ for all $c\in\ell^2(\mathbb{N})$ and the proof is complete.
\end{proof}
As a preparation for Theorem \ref{th3}, we state the following lemma
\begin{lemma}\label{lem3}
Let $E$ be an infinite matrix which defines a bounded operator on $\bigoplus_{n=1}^\infty\mathcal{H}$. Then the rows and the columns of $E$ belongs to $\ell^2(\mathbb{N})$.
\end{lemma}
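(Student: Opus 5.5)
The plan is to test the bounded operator $T$ induced by $E$ on $\bigoplus_{n=1}^\infty\mathcal{H}$ against elementary vectors built from a fixed unit vector, and to read off the entries of $E$ from the resulting coordinates. If $\mathcal{H}=\{0\}$ the statement is vacuous in the interesting sense, so assume $\mathcal{H}\neq\{0\}$ and fix $f\in\mathcal{H}$ with $\Vert f\Vert=1$. By hypothesis $T\Psi=\left\{\sum_{k}E_{n,k}\psi_k\right\}_{n=1}^\infty$ is bounded, say with norm $\Vert T\Vert$.

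\textbf{Columns.} For fixed $k\in\mathbb{N}$, let $\Psi^{(k)}$ be the sequence with $f$ in the $k$th place and $0$ elsewhere. Then $\Psi^{(k)}\in\bigoplus_{n=1}^\infty\mathcal{H}$ with norm $1$. Each series defining $(T\Psi^{(k)})_n$ has a single nonzero term, so $(T\Psi^{(k)})_n=E_{n,k}f$. Hence
\begin{equation*}
\sum_{n=1}^\infty\vert E_{n,k}\vert^2=\sum_{n=1}^\infty\Vert E_{n,k}f\Vert^2=\Vert T\Psi^{(k)}\Vert^2\leq\Vert T\Vert^2<\infty,
\end{equation*}
so every column lies in $\ell^2(\mathbb{N})$, with norm at most $\Vert T\Vert$.

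\textbf{Rows.} This is the less immediate step, since rows are not directly the image of a test vector. I would use the bounded coordinate map $T_n\Psi=(T\Psi)_n$. It satisfies $\Vert T_n\Psi\Vert\leq\Vert T\Psi\Vert\leq\Vert T\Vert\,\Vert\Psi\Vert$, and Step 1 of Theorem \ref{th1} also gives its boundedness. For fixed $n$ and $m$, take the truncated test vector $\Psi=\{\overline{E_{n,k}}f\}_{k=1}^m$ followed by zeros. Then
\begin{equation*}
T_n\Psi=\Big(\sum_{k=1}^m\vert E_{n,k}\vert^2\Big)f,
\end{equation*}
so
\begin{equation*}
\sum_{k=1}^m\vert E_{n,k}\vert^2\leq\Vert T\Vert\Big(\sum_{k=1}^m\vert E_{n,k}\vert^2\Big)^{1/2}.
\end{equation*}
This yields $\sum_{k=1}^m\vert E_{n,k}\vert^2\leq\Vert T\Vert^2$ for every $m$. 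Letting $m\to\infty$ gives $\sum_{k}\vert E_{n,k}\vert^2\leq\Vert T\Vert^2$, so each row is in $\ell^2(\mathbb{N})$. Truncating first avoids any circular assumption that the row is already square-summable.

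Alternatively, one could reduce to the scalar case via Lemma \ref{lem2} and Theorem \ref{th5}, and use the Riesz representation of the bounded functional $c\mapsto(Uc)_n$. That route needs the representing vector to agree with the row on the finitely supported sequences. The truncation argument above is more direct, and I would use it.
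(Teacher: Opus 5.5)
Your proof is correct, and it takes a different route from the paper's in both halves.

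\textbf{The paper's route.} The paper first passes, tacitly via Lemma \ref{lem2} and Theorem \ref{th5}, to the bounded operator induced on $\ell^2(\mathbb{N})$. It obtains each row as the Riesz representer of the bounded coordinate functional $c\mapsto (Tc)_n$, identifying $E_{n,k}=\overline{d_k}$ by testing on unit vectors. For the columns it asserts that the adjoint matrix $E^\ast$ represents $T^\ast$ and applies the row result to $T^\ast$.

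\textbf{Your route.} You stay in $\bigoplus_{n=1}^\infty\mathcal{H}$ and use explicit test vectors. Placing $f$ in slot $k$ exhibits the $k$th column directly as $\{E_{n,k}f\}_{n}$. The truncated vector $\{\overline{E_{n,k}}f\}_{k\le m}$ gives the finite-section estimate. That estimate is just the elementary proof of the duality fact the paper obtains from the Riesz representation theorem.

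\textbf{What each buys.} Your argument is self-contained and gives explicit bounds: every row and column has $\ell^2$-norm at most $\Vert T\Vert$. It needs no reduction to $\ell^2(\mathbb{N})$. It also avoids the paper's unproved claim that $E^\ast$ represents $T^\ast$. The natural justification of that claim is the computation $(T^\ast c)_n=\langle c,Te_n\rangle$, which already uses that the columns $Te_n$ lie in $\ell^2(\mathbb{N})$, i.e.\ exactly your column argument. The paper's route is shorter to write once the $\ell^2$ operator is available.

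\textbf{A small correction.} When $\mathcal{H}=\{0\}$ the lemma is not vacuous but false: every matrix then induces the zero operator on $\bigoplus_{n=1}^\infty\mathcal{H}=\{0\}$, whatever its rows. So $\mathcal{H}\neq\{0\}$ is a genuine hypothesis, as in Lemma \ref{lem2}. The paper's proof also relies on it tacitly.
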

\begin{proof}
Suppose that $T:\ell^2(\mathbb{N})\longrightarrow\ell^2(\mathbb{N})$ is the bounded linear operator defined by
\begin{align}\label{eq10}
Tc=\left\{\sum_{k=1}^\infty E_{n,k}c_k\right\}_{n=1}^\infty,
\end{align}
for each $c=\{c_k\}_{k=1}^\infty$. For fixed $n\in\mathbb{N}$, the mapping $T_n:\ell^2(\mathbb{N})\longrightarrow\mathbb{C}$ defined by $T_nc=(Tc)_n$, is a bounded linear functional. Indeed, for each $c=\{c_k\}_{k=1}^\infty$,
\begin{align*}
\left\vert T_nc\right\vert^2=\left\vert\left(Tc\right)_n\right\vert^2\leq\sum_{n=1}^\infty\left\vert\left(Tc\right)_n\right\vert^2=\left\Vert\left\{\left(Tc\right)_n\right\}_{n=1}^\infty\right\Vert_{\ell^2}^2=\left\Vert Tc\right\Vert_{\ell^2}^2\leq\left\Vert T\right\Vert^2\left\Vert c\right\Vert_{\ell^2}^2.
\end{align*}
Hence by Riesz Representation Theorem, there exist a unique sequence $d=\{d_k\}_{k=1}^\infty\in\ell^2(\mathbb{N})$ such that 
\begin{align*}
T_nc=\left\langle c,d\right\rangle_{\ell^2}=\sum_{k=1}^\infty c_k\overline{d_k}\qquad\left(c\in\ell^2(\mathbb{N})\right).
\end{align*}
But by \eqref{eq10}, 
\begin{align*}
T_nc=\sum_{k=1}^\infty E_{n,k}c_k\qquad\left(c\in\ell^2(\mathbb{N})\right). 
\end{align*}
This shows that $\left\{E_{n,k}\right\}_{k=1}^\infty=\left\{\overline{d_k}\right\}_{k=1}^\infty$ and so $\left\{E_{n,k}\right\}_{k=1}^\infty\in\ell^2(\mathbb{N})$.
To prove the remain, just note that the adjoint matrix $E^\ast$ represents the adjoint operator $T^\ast$, which is bounded of course.
\end{proof}
Motivated by Theorem \ref{th4}, we prove our main result which asserts that a classical frame can be considered as an $E$-frame.
\begin{theorem}\label{th3}
Suppose that $E=(E_{n,k})_{n,k\geq 1}$ is an invertible matrix which defines a surjective matrix mapping on $\bigoplus_{n=1}^\infty\mathcal{H}$. Suppose that $\Psi=\lbrace\psi_{k}\rbrace_{k=1}^{\infty}$ is a frame for $\mathcal{H}$ with bounds $A$ and $B$. Then $\Psi$ is an $E$-frame with bounds $\Vert E\Vert^{2}B$ and $CA$, for some $C>0$. 
\end{theorem}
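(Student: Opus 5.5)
The key identity is that the $E$-analysis coefficients of $f$ are the image under the conjugate matrix of the ordinary frame coefficients. For $f\in\mathcal{H}$ and each $n$, continuity of the inner product gives
\begin{align*}
\left\langle f,(E\Psi)_n\right\rangle=\sum_{k=1}^\infty \overline{E_{n,k}}\left\langle f,\psi_k\right\rangle ,
\end{align*}
provided $\Psi$ is an $E$-sequence. So I first need the series $\sum_k E_{n,k}\psi_k$ to converge. By Lemma \ref{lem3} (applicable since Theorem \ref{th1} makes $E$ bounded on $\bigoplus_{n=1}^\infty\mathcal{H}$), each row $\{E_{n,k}\}_k$ lies in $\ell^2(\mathbb{N})$. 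Since the synthesis operator $T_\Psi$ of the frame is bounded on $\ell^2(\mathbb{N})$, the series $\sum_k E_{n,k}\psi_k=T_\Psi(\{E_{n,k}\}_k)$ converges. Writing $c=T_\Psi^*f=\{\langle f,\psi_k\rangle\}_k\in\ell^2(\mathbb{N})$ and $\overline{E}$ for the entrywise conjugate matrix, the identity above says $T^*_{E\Psi}f=\overline{E}\,c$.

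Next I transfer the results for $E$ to $\overline{E}$. Conjugating entrywise commutes with everything in Lemma \ref{lem2}, Theorem \ref{th5} and Theorem \ref{th4}. Concretely, $\overline{E}\,c=\overline{E\bar c}$, and $c\mapsto \bar c$ is an isometric bijection of $\ell^2(\mathbb{N})$. Hence $\overline{E}$ induces the operator $\overline{U}c:=\overline{U\bar c}$, where $U$ is the bounded bijection of Theorem \ref{th4}. This $\overline{U}$ is a bounded bijection of $\ell^2(\mathbb{N})$ with $\|\overline{U}\|=\|U\|$ and $\|\overline{U}^{-1}\|=\|U^{-1}\|$.

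For the upper bound I would compute
\begin{align*}
\sum_{n}\left|\langle f,(E\Psi)_n\rangle\right|^2=\|\overline{U}c\|^2\le \|U\|^2\|c\|^2\le \|U\|^2 B\|f\|^2 .
\end{align*}
Here I interpret $\|E\|$ as the operator norm of $E$ on $\ell^2(\mathbb{N})$. It is at most the norm $\|T\|$ on $\bigoplus_{n=1}^\infty\mathcal{H}$: embedding $c\mapsto\{c_kf\}$ with $\|f\|=1$ shows $\|Uc\|=\|T(\{c_kf\})\|\le\|T\|\|c\|$. So the bound $\|E\|^2B$ holds with either reading.

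For the lower bound, bounded invertibility gives $\|c\|=\|\overline{U}^{-1}\overline{U}c\|\le\|U^{-1}\|\,\|\overline{U}c\|$. Therefore
\begin{align*}
\sum_n\left|\langle f,(E\Psi)_n\rangle\right|^2\ge \|U^{-1}\|^{-2}\|c\|^2\ge \|U^{-1}\|^{-2}A\|f\|^2,
\end{align*}
so $C=\|U^{-1}\|^{-2}>0$ works.

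I expect the main obstacle to be the well-definedness and interchange step. One must check that $\Psi$ is an $E$-sequence even though $\Psi$ itself need not lie in $\bigoplus_{n=1}^\infty\mathcal{H}$, which is handled by the row-in-$\ell^2$ argument above. One must also justify that $\overline{E}$ inherits surjectivity and bijectivity on $\ell^2(\mathbb{N})$; I would settle this through the conjugation trick rather than re-proving Theorem \ref{th4}.
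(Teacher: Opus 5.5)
Your proof is correct and is essentially the paper's argument. Rows of $E$ lie in $\ell^2(\mathbb{N})$ (Lemma \ref{lem3}), which together with the Bessel property makes $\Psi$ an $E$-sequence, and the $E$-analysis coefficients are the image of the frame coefficients under the bounded bijection of Theorem \ref{th4}. The only cosmetic difference is that the paper absorbs the conjugation by writing $\bigl|\sum_k \overline{E_{n,k}}\langle f,\psi_k\rangle\bigr|=\bigl|\sum_k E_{n,k}\langle \psi_k,f\rangle\bigr|$ and applying $E$ itself to $\{\langle\psi_k,f\rangle\}_k$, rather than introducing $\overline{U}$.

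Your constant $C=\|U^{-1}\|^{-2}$ is the correct one. The paper's choice $C=1/\|E^{-1}\|$ drops the square: for $E=\frac{1}{2}I$ the inequality $\|Ec\|^2\geq \|E^{-1}\|^{-1}\|c\|^2$ fails. This slip is harmless only because the statement merely asserts the existence of some $C>0$.
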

\begin{proof}
By Theorem \ref{th4}, the matrix $E$ induces a bounded linear bijection on $\ell^2(\mathbb{N})$, we note it by $E$ again. Lemma \ref{lem3} states that $\left\{E_{n,k}\right\}_{k=1}^\infty\in\ell^2(\mathbb{N})$ for all $n\in\mathbb{N}$. This with \cite[Corollary 3.2.5]{ch} makes $\sum_{k=1}^\infty E_{n,k}\psi_k$ converge for all $n\in\mathbb{N}$. Hence $\Psi$ is an $E$-sequence. Noting that $\left\lbrace\left\langle f,\psi_{k}\right\rangle\right\rbrace_{k=1}^{\infty}\in\ell^{2}(\mathbb{N})$ and using this fact that the inner product of $\mathcal{H}$ is continuous we have
\begin{align}\label{eq1}
\sum_{n=1}^{\infty}\left\vert\left\langle f,\left(E\Psi\right)_{n}\right\rangle\right\vert^{2}&=\sum_{n=1}^{\infty}\left\vert\left\langle f,\sum_{k=1}^{\infty}E_{n,k}\psi_{k}\right\rangle\right\vert^{2}=\sum_{n=1}^{\infty}\left\vert\sum_{k=1}^{\infty}\overline{E_{n,k}}\left\langle f,\psi_{k}\right\rangle\right\vert^{2}\\&=\sum_{n=1}^{\infty}\left\vert\sum_{k=1}^{\infty}{E_{n,k}}\left\langle\psi_{k},f\right\rangle\right\vert^{2}=\left\Vert E\left\lbrace\left\langle\psi_{k},f\right\rangle\right\rbrace_{k=1}^{\infty}\right\Vert_{\ell^{2}}^{2}\nonumber\\&\leq\Vert E\Vert^{2}\sum_{k=1}^{\infty}\left\vert\left\langle \psi_{k},f\right\rangle\right\vert^{2}\leq\Vert E\Vert^{2}B\Vert f\Vert^{2}\nonumber.
\end{align}
To find a lower $E$-frame bound for $\Psi$ we note that since $E$ is invertible, so $E^{*}$ is a bijection. Since $E^{-1}$ is bounded 
$$\left\Vert\lbrace c_{k}\rbrace_{k=1}^{\infty}\right\Vert_{\ell^{2}}=\|E^{-1}E\lbrace c_{k}\rbrace_{k=1}^{\infty}\|_{\ell^{2}}\leq\|E^{-1}\|\left\Vert E\lbrace c_{k}\rbrace_{k=1}^{\infty}\right\Vert_{\ell^{2}},$$
for all $\lbrace c_{k}\rbrace_{k=1}^{\infty}\in\ell^{2}(\mathbb{N})$. Putting $C=\frac{1}{\|E^{-1}\|}$, the argument we stated in (\ref{eq1}) implies that
\begin{equation*}
\sum_{n=1}^{\infty}\left\vert\left\langle f,\left(E\Psi\right)_{n}\right\rangle\right\vert^{2}=\left\Vert E\left\lbrace\left\langle \Psi_{k},f\right\rangle\right\rbrace_{k=1}^{\infty}\right\Vert_{\ell^{2}}^{2}\geq C\sum_{k=1}^{\infty}\left\vert\left\langle f,\Psi_{k}\right\rangle\right\vert^{2}\geq CA\Vert f\Vert^{2}.
\end{equation*}
\end{proof}
\begin{corollary}
Suppose that $E=(E_{n,k})_{n,k\geq 1}$ is a diagonal matrix such that $0<\inf_n\left\vert\lambda_n\right\vert\leq\sup_n\left\vert\lambda_n\right\vert<\infty$ where $\lbrace\lambda_n\rbrace_{n=1}^{\infty}=\lbrace E_{n,n}\rbrace_{n\geq1}$. Then any frame $\Psi=\lbrace\psi_k\rbrace_{k=1}^{\infty}$ for $\mathcal{H}$ with bounds $A$ and $B$, is an $E$-frame for $\mathcal{H}$ with bounds $CA$ and $\lambda^2B$ where $\lambda=sup_n\left\vert\lambda_n\right\vert$ and $C$ is a positive number.
\end{corollary}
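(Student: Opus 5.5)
The plan is to deduce the corollary from Theorem \ref{th3}, or more directly to redo the computation in its proof for the diagonal case, which is elementary. First I will verify the hypotheses of Theorem \ref{th3}. The diagonal matrix $E$ has algebraic inverse $E^{-1}=\mathrm{diag}(\lambda_n^{-1})$. This is well defined because $\inf_n|\lambda_n|>0$.

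For $\Phi=\{\phi_k\}_{k=1}^\infty\in\bigoplus_{n=1}^\infty\mathcal{H}$ we have $(E\Phi)_n=\lambda_n\phi_n$. Hence
\begin{align*}
\sum_{n=1}^\infty\|\lambda_n\phi_n\|^2\le\lambda^2\|\Phi\|^2 ,
\end{align*}
so $E$ is a matrix mapping on $\bigoplus_{n=1}^\infty\mathcal{H}$. It is also surjective: given $\Psi$, the sequence $\Phi=\{\lambda_n^{-1}\psi_n\}_{n=1}^\infty$ lies in $\bigoplus_{n=1}^\infty\mathcal{H}$, with norm at most $(\inf_n|\lambda_n|)^{-1}\|\Psi\|$, and it satisfies $E\Phi=\Psi$. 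Theorem \ref{th3} then shows that $\Psi$ is an $E$-frame with upper bound $\|E\|^2B$ and lower bound $CA$.

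It remains to identify $\|E\|$ as an operator on $\ell^2(\mathbb{N})$. For a diagonal matrix, $\|Ec\|_{\ell^2}^2=\sum_n|\lambda_n|^2|c_n|^2\le\lambda^2\|c\|^2$. Testing on the standard unit vectors shows $\|E\|=\sup_n|\lambda_n|=\lambda$, which gives the upper bound $\lambda^2B$.

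To be fully self-contained I would also compute directly:
\begin{align*}
\sum_{n}\bigl|\langle f,(E\Psi)_n\rangle\bigr|^2=\sum_n|\lambda_n|^2\,|\langle f,\psi_n\rangle|^2 .
\end{align*}
This yields the bounds
\begin{align*}
\mu^2A\|f\|^2\le\sum_{n}\bigl|\langle f,(E\Psi)_n\rangle\bigr|^2\le\lambda^2B\|f\|^2 ,\qquad \mu=\inf_n|\lambda_n|.
\end{align*}
So one may take $C=\mu^2=\|E^{-1}\|^{-2}$. Note that Theorem \ref{th3}'s stated constant $C=1/\|E^{-1}\|$ should really be squared, and the direct computation sidesteps that issue.

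There is no real obstacle here. The only care needed is to check surjectivity of $E$ on the direct sum, which uses the lower bound $\inf_n|\lambda_n|>0$, and to identify the operator norm of the diagonal matrix as $\sup_n|\lambda_n|$.
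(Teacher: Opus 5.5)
Your proposal is correct, and its main line matches the paper's proof: you check that $\mathrm{diag}(\lambda_n^{-1})$ is the algebraic inverse, use $0<\inf_n|\lambda_n|\le\sup_n|\lambda_n|<\infty$ to show that $E$ maps $\bigoplus_{n=1}^\infty\mathcal{H}$ into and onto itself, invoke Theorem~\ref{th3}, and bound $\|E\|\le\lambda$ on $\ell^2(\mathbb{N})$. Your extra direct computation $\sum_n|\langle f,(E\Psi)_n\rangle|^2=\sum_n|\lambda_n|^2|\langle f,\psi_n\rangle|^2$ is a useful self-contained shortcut that gives the explicit constant $C=(\inf_n|\lambda_n|)^2=\|E^{-1}\|^{-2}$, and you are right that the argument in Theorem~\ref{th3} actually yields $C=\|E^{-1}\|^{-2}$, not $\|E^{-1}\|^{-1}$.
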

\begin{proof}
That $\Psi$ is an $E$-sequence is trivial.
First note that
\begin{equation*}
E=\begin{pmatrix}
\lambda_1&0&0&0&\cdots\\
0&\lambda_2&0&0&\cdots\\
0&0&\lambda_3&0&\cdots\\
0&0&0&\lambda_4&\cdots\\
\vdots&\vdots&\vdots&\vdots&\ddots
\end{pmatrix}
\end{equation*}
is invertible where $E^{-1}=(E_{n,k}^{-1})_{n,k\geq 1}$ is a diagonal matrix with $E_{n,n}^{-1}=\lambda_{n}^{-1}$ for each $n\in\mathbb{N}$.
Moreover $E$ defines a matrix mapping on $\bigoplus_{n=1}^\infty\mathcal{H}$, because for all $\Phi=\lbrace\phi_k\rbrace_{k=1}^{\infty}\in\bigoplus_{n=1}^\infty\mathcal{H}$,
\begin{align*}
\sum_{n=1}^\infty\left\Vert\left(E\Phi\right)_n\right\Vert^2=\sum_{n=1}^\infty\left\Vert\lambda_n\phi_n\right\Vert^2\leq\lambda^2\sum_{n=1}^\infty\left\Vert\phi_n\right\Vert^2<\infty.
\end{align*}
Now we show $E$ acts surjectively on $\bigoplus_{n=1}^\infty\mathcal{H}$. To this end, take any  $\Phi=\lbrace\phi_k\rbrace_{k=1}^{\infty}\in\bigoplus_{n=1}^\infty\mathcal{H}$ and define $F=\left\{f_n\right\}_{n=1}^\infty$ defined by 
\begin{align*}
f_n=\frac{\phi_n}{\lambda_n}\quad\left(n\in\mathbb{N}\right).
\end{align*}
We show $F\in\bigoplus_{n=1}^\infty\mathcal{H}$. Indeed, 
\begin{align*}
\sum_{n=1}^\infty\left\Vert f_n\right\Vert^2=\sum_{n=1}^\infty\frac{\left\Vert\phi_n\right\Vert^2}{\left\vert\lambda_n\right\vert^2}\leq\frac{1}{a^2}\sum_{n=1}^\infty\left\Vert\phi_n\right\Vert^2<\infty,
\end{align*}
where $a=inf_n\left\vert\lambda_n\right\vert>0$. 
 Also, $(EF)_n=\lambda_nf_n=\phi_n$ for all $n\in\mathbb{N}$, that is $EF=\Phi$. Therefore the invertible matrix $E$ defines a surjective matrix mapping on $\bigoplus_{n=1}^\infty\mathcal{H}$. Hence, $E$ induces a bounded bijection $E$ on $\ell^2(\mathbb{N})$ by Theorem \ref{th4}. Thus Theorem \ref{th3} implies that $\Psi$ is an $E$-frame with lower bound $CA$ with $C>0$ and upper bound $\Vert E\Vert^2B$. But $\Vert E\Vert\leq\lambda$ because \begin{align*}
 \left\Vert Ec\right\Vert_{\ell^2}^2=\sum_{n=1}^\infty\left\vert\left(Ec\right)_n\right\vert^2&=\sum_{n=1}^\infty\left\vert\sum_{k=1}^\infty E_{n,k}c_k\right\vert^2\\&=\sum_{n=1}^\infty\left\vert\lambda_nc_n\right\vert^2\\&\leq\sum_{n=1}^\infty\left\vert\lambda_n\right\vert^2\left\vert c_n\right\vert^2\leq\lambda^2\sum_{n=1}^\infty\left\vert c_n\right\vert^2=\lambda^2\left\Vert c\right\Vert_{\ell^2}^2.
\end{align*}
 This completes the proof.
\end{proof}
\begin{remark}\label{rem1}
In this part, we define a standard conjugate linear isometry in Hilbert space $\mathcal{H}$. Let $\left\{e_k\right\}_{k=1}^\infty$ be an ortonormal basis for $\mathcal{H}$ and define the mapping $f\mapsto\bar{f}$ from $\mathcal{H}$ to $\mathcal{H}$ where $\bar{f}=\sum_{k=1}^\infty\overline{\left\langle f,e_k\right\rangle}e_k$.

This is a conjugate linear isometry. In fact for any $\alpha\in\mathbb{C}$ and any $f,g\in\mathcal{H}$ we have
\begin{align*}
\overline{\alpha f+g}=\sum_{k=1}^\infty\overline{\left\langle\alpha f+g,e_k\right\rangle}e_k=\bar{\alpha}\sum_{k=1}^\infty\overline{\left\langle f,e_k\right\rangle}e_k+\sum_{k=1}^\infty\overline{\left\langle g,e_k\right\rangle}e_k=\bar{\alpha}\bar{f}+\bar{g}. 
\end{align*}
Also
\begin{align}\label{eq13}
\left\Vert\bar{f}\right\Vert^2=\left\langle\bar{f},\bar{f}\right\rangle&=\left\langle\sum_{k=1}^\infty\overline{\left\langle f,e_k\right\rangle}e_k,\sum_{k=1}^\infty\overline{\left\langle f,e_k\right\rangle}e_k\right\rangle\\&=\left\langle\sum_{k=1}^\infty\left\langle f,e_k\right\rangle e_k,\sum_{k=1}^\infty\left\langle f,e_k\right\rangle e_k\right\rangle=\left\langle f,f\right\rangle=\left\Vert f\right\Vert^2\nonumber.
\end{align}
\eqref{eq13} indicates that the conjugate linear isometry is continuous on $\mathcal{H}$.
We can extend this conjugate linear isometry to $\bigoplus_{n=1}^\infty\mathcal{H}$ which for each $\Psi=\left\{\psi_k\right\}_{k=1}^\infty\in\bigoplus_{n=1}^\infty\mathcal{H}$ is defined by
\begin{align*}
\bar{\Psi}=\left\{\bar{\psi_k}\right\}_{k=1}^\infty.
\end{align*}
\end{remark}
\begin{theorem}\label{th6}
Assume that $E=\left(E_{n,k}\right)_{n,k\geq1}$ defines a matrix mapping on $\bigoplus_{n=1}^\infty\mathcal{H}$ and $\bar{E}=\left(\overline{E_{n,k}}\right)_{n,k\geq1}$. Then $\bar{E}$ also defines a matrix mapping on $\bigoplus_{n=1}^\infty\mathcal{H}$. If $E$ is an invertible matrix, then $\bar{E}$ is invertible. If $E$ is a surjective matrix mapping, then $\bar{E}$ acts surjectively on $\bigoplus_{n=1}^\infty\mathcal{H}$.
\end{theorem}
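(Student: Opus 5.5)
The plan is to use the conjugate linear isometry $\Psi\mapsto\bar\Psi$ of Remark \ref{rem1} to move every statement about $E$ over to $\bar E$. The key identity, to be checked first, is
\begin{equation*}
\overline{\sum_{k=1}^\infty E_{n,k}\psi_k}=\sum_{k=1}^\infty\overline{E_{n,k}}\,\bar{\psi_k}\qquad(n\in\mathbb{N}),
\end{equation*}
valid whenever the left-hand series converges in $\mathcal{H}$. It follows from two properties of $f\mapsto\bar f$. First, conjugate linearity applied to the partial sums gives $\overline{\sum_{k=1}^m E_{n,k}\psi_k}=\sum_{k=1}^m\overline{E_{n,k}}\bar{\psi_k}$. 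Second, continuity, coming from the isometry \eqref{eq13}, lets us pass to the limit $m\to\infty$. As a consequence, convergence of the left-hand series forces convergence of the right-hand series. I will also note that the map is an involution, $\bar{\bar f}=f$, which is immediate from the definition via the orthonormal basis. Hence it is a bijection of $\mathcal{H}$, and of $\bigoplus_{n=1}^\infty\mathcal{H}$, that preserves norms.

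\textbf{$\bar E$ is a matrix mapping.} Take $\Phi\in\bigoplus_{n=1}^\infty\mathcal{H}$ and put $\Psi=\bar\Phi$. Then $\Psi$ lies in the direct sum, since norms are preserved, and $E\Psi$ lies in the direct sum by hypothesis. By the identity, each series $\sum_k\overline{E_{n,k}}\phi_k$ converges and equals $\overline{(E\Psi)_n}$. So $\bar E\Phi=\overline{E\Psi}$, and therefore
\begin{equation*}
\sum_n\|(\bar E\Phi)_n\|^2=\sum_n\|(E\Psi)_n\|^2<\infty .
\end{equation*}

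\textbf{Invertibility.} Let $E^{-1}$ be the algebraic inverse of $E$. I claim $\overline{E^{-1}}$ is the inverse of $\bar E$: conjugating the scalar identities $\sum_j E^{-1}_{n,j}E_{j,k}=\delta_{n,k}$ and $\sum_j E_{n,j}E^{-1}_{j,k}=\delta_{n,k}$ termwise gives the corresponding identities for $\overline{E^{-1}}$ and $\bar E$. Conjugation of a convergent scalar series is harmless, so nothing more is needed here.

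\textbf{Surjectivity.} Given $\Phi\in\bigoplus_{n=1}^\infty\mathcal{H}$, surjectivity of $E$ gives some $\Theta$ with $E\Theta=\bar\Phi$. Put $\Lambda=\bar\Theta$. The identity yields $\bar E\Lambda=\overline{E\Theta}=\bar{\bar\Phi}=\Phi$.

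\textbf{Main obstacle.} The only delicate point is the interchange of conjugation with the infinite sum in $\mathcal{H}$. I will state it carefully using the continuity from \eqref{eq13} together with conjugate linearity on the partial sums. I will also verify the involution property explicitly from the orthonormal basis expansion, because the surjectivity step depends on it.
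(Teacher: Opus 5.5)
Your proof is correct. For invertibility (conjugating the scalar identities for $E^{-1}$) and surjectivity (solving $E\Theta=\bar\Phi$ and conjugating, as in \eqref{eq14}--\eqref{eq15}) it is the paper's argument. The route differs for the first claim.

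\textbf{The paper's route.} It first uses Lemma \ref{lem3}, which itself rests on the boundedness results of Theorems \ref{th1} and \ref{th5}, to put the rows of $E$ in $\ell^2(\mathbb{N})$. Cauchy--Schwarz then gives absolute convergence of $\sum_k\overline{E_{n,k}}\psi_k$. Square-summability is then obtained from the scaling trick $\psi_k=\bar\alpha\phi_k$ in \eqref{eq12.1}.

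\textbf{That last step does not hold up.} Since $\alpha\phi_k=(\alpha/\bar\alpha)\psi_k$ with $|\alpha/\bar\alpha|=1$, \eqref{eq12.1} amounts to asserting $\|\sum_k\overline{E_{n,k}}\psi_k\|=\|\sum_k E_{n,k}\psi_k\|$. This fails in general. Take the matrix whose only nonzero entries are $E_{1,1}=1$ and $E_{1,2}=i$, and let $\psi_1=e\neq0$, $\psi_2=ie$, and $\psi_k=0$ for $k\geq3$. Then $(E\Psi)_1=0$ but $(\bar E\Psi)_1=2e$.

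\textbf{Your route.} Your identity $\bar E\Phi=\overline{E\bar\Phi}$ is the correct version of what the paper is after. The vectors must be conjugated along with the scalars, i.e.\ $E$ must be evaluated at $\bar\Phi$, not at a scalar multiple of $\Phi$. This gives, at once, convergence of each row series of $\bar E\Phi$ and $\sum_n\|(\bar E\Phi)_n\|^2=\sum_n\|(E\bar\Phi)_n\|^2<\infty$. It needs nothing beyond Remark \ref{rem1} and avoids the Lemma \ref{lem3} chain entirely.

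The only extra thing the paper's detour buys is absolute, rather than mere, convergence of each row series, which the definition of a matrix mapping does not require. Your explicit check of $\bar{\bar f}=f$ is also worthwhile, since the paper uses it in \eqref{eq15} without proof.
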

\begin{proof}
Take $\Psi=\left\{\psi_k\right\}_{k=1}\in\bigoplus_{n=1}^\infty\mathcal{H}$ arbitrarily. Then $\sum_{k=1}^\infty E_{n,k}\psi_k$ is convergent for each $n\in\mathbb{N}$ by assumption. Also Lemma \ref{lem3} shows that $\left\{E_{n,k}\right\}_{k=1}^\infty\in\ell^2(\mathbb{N})$. First we prove that $\Psi$ is an $\bar{E}$-sequence. In fact, for each $n\in\mathbb{N}$,
\begin{align*}
\sum_{k=1}^\infty\left\Vert\overline{E_{n,k}}\psi_k\right\Vert=\sum_{k=1}^\infty\left\vert\overline{E_{n,k}}\right\vert\left\Vert\psi_k\right\Vert\leq\left(\sum_{k=1}^\infty\left\vert\overline{E_{n,k}}\right\vert^2\right)^\frac{1}{2}\left(\sum_{k=1}^\infty\left\Vert\psi_k\right\Vert^2\right)^\frac{1}{2}<\infty.
\end{align*}
Hence $\sum_{k=1}^\infty\overline{E_{n,k}}\psi_k$ is absolutely convergent in $\mathcal{H}$, and is therefore convergent. Now we prove that $\bar{E}\Psi\in\bigoplus_{n=1}^\infty\mathcal{H}$. To this end, let $\alpha\neq0$ be an scalar and set $\phi_k=(\bar{\alpha})^{-1}\psi_k$ for each $k\in\mathbb{N}$. Then $\left\{\phi_k\right\}_{k=1}^\infty\in\bigoplus_{n=1}^\infty\mathcal{H}$ obviously. Moreover, for fixed $n\in\mathbb{N}$,
\begin{align}\label{eq12.1}
\left\Vert\sum_{k=1}^\infty\overline{E_{n,k}}\psi_k\right\Vert^2&=\left\langle\sum_{k=1}^\infty\overline{E_{n,k}}\bar{\alpha}\phi_k,\sum_{k=1}^\infty\overline{E_{n,k}}\bar{\alpha}\phi_k\right\rangle\\&=\left\langle\sum_{k=1}^\infty E_{n,k}\alpha\phi_k,\sum_{k=1}^\infty E_{n,k}\alpha\phi_k\right\rangle=\left\Vert\sum_{k=1}^\infty E_{n,k}\alpha\phi_k\right\Vert^2\nonumber.
\end{align}

That $\left\{\alpha\phi_k\right\}_{k=1}^\infty$ belongs to $\bigoplus_{n=1}^\infty\mathcal{H}$ is Obvious. Thus $E\left\{\alpha\phi_k\right\}_{k=1}^\infty\in\bigoplus_{n=1}^\infty\mathcal{H}$ because $E$ is a matrix mapping on $\bigoplus_{n=1}^\infty\mathcal{H}$. Consequently, \eqref{eq12.1} implies 
\begin{align}
\sum_{n=1}^\infty\left\Vert\sum_{k=1}^\infty\overline{E_{n,k}}\psi_k\right\Vert^2=\sum_{n=1}^\infty\left\Vert\sum_{k=1}^\infty E_{n,k}\alpha\phi_k\right\Vert^2<\infty.
\end{align}
Therefore $\bar{E}\Psi\in\bigoplus_{n=1}^\infty\mathcal{H}$ and $\bar{E}$ is a matrix mapping on $\bigoplus_{n=1}^\infty\mathcal{H}$. 

Now suppose that $E$ is surjective. To show $\bar{E}$ is a surjective matrix mapping, we use the conjugate linear isometry which is defined in Remark \ref{rem1}. For given $\Psi=\left\{\psi_k\right\}_{k=1}\in\bigoplus_{n=1}^\infty\mathcal{H}$, the sequence $\bar{\Psi}=\left\{\bar{\psi_k}\right\}_{k=1}$ belongs to $\bigoplus_{n=1}^\infty\mathcal{H}$. Since $E$ is a surjective matrix mapping, there exists $\Phi=\left\{\phi_k\right\}_{k=1}\in\bigoplus_{n=1}^\infty\mathcal{H}$ such that $E\Phi=\bar{\Psi}$, that is, for fixed $n\in\mathbb{N}$,
\begin{align}\label{eq14}
\sum_{k=1}^\infty E_{n,k}\phi_k=\bar{\psi_n}.
\end{align}
Note that the series in \eqref{eq14} is convergent. Moreover, the conjugate linear isometry is continuous. Hence
\begin{align}\label{eq15}
\sum_{k=1}^\infty\overline{E_{n,k}}\bar{\phi_k}=\overline{\sum_{k=1}^\infty E_{n,k}\phi_k}=\bar{\bar{\psi_n}}=\psi_n.
\end{align}
\eqref{eq15} shows that $\left(\bar{E}\bar{\Phi}\right)_n=\psi_n$ for all $n\in\mathbb{N}$. Thus $\bar{E}\bar{\Phi}=\Psi$ and $\bar{E}$ is a surjective matrix mapping on $\bigoplus_{n=1}^\infty\mathcal{H}$.

For the rest of the proof, Suppose that $EE^{-1}=E^{-1}E=I$ where $I$ is the infinite identity matrix. This means that
\begin{equation}\label{eq16}
\sum_{k=1}^\infty E_{n,k}E^{-1}_{k,j}=\begin{cases}
0&n\neq j,\\1&n=j.\end{cases}
\end{equation}
If we take the conjugate of both sides of the equality \eqref{eq16}, we obtain
\begin{equation}
\sum_{k=1}^\infty\overline{E_{n,k}}\overline{E^{-1}_{k,j}}=\begin{cases}
0&n\neq j,\\1&n=j.\end{cases}
\end{equation}
which means that $\bar{E}\overline{E^{-1}}=\overline{E^{-1}}\bar{E}=I$. In this way, the argument is completed.
\end{proof}
\begin{corollary}\label{col2}
Assume that $E=\left(E_{n,k}\right)_{n,k\geq1}$ is an invertible matrix which defines a surjective matrix mapping on $\bigoplus_{n=1}^\infty\mathcal{H}$. Then $\bar{E}=\left(\overline{E_{n,k}}\right)_{n,k\geq1}$ induces bounded linear bijections on $\bigoplus_{n=1}^\infty\mathcal{H}$ and $\ell^2(\mathbb{N})$.
\end{corollary}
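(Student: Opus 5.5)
The plan is to reduce Corollary \ref{col2} to results already established by first transferring the hypotheses from $E$ to $\bar{E}$ and then applying the operator-theoretic results to $\bar{E}$.

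\textbf{Step 1: transfer the hypotheses.} By Theorem \ref{th6}, since $E$ defines a matrix mapping on $\bigoplus_{n=1}^\infty\mathcal{H}$, so does $\bar{E}$. Since $E$ is invertible, $\bar{E}$ is invertible, and its algebraic inverse is $\overline{E^{-1}}$. Since $E$ acts surjectively, $\bar{E}$ also acts surjectively on $\bigoplus_{n=1}^\infty\mathcal{H}$. Hence $\bar{E}$ satisfies exactly the hypotheses that Corollary \ref{col1} and Theorem \ref{th4} place on a matrix.

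\textbf{Step 2: the direct sum.} Corollary \ref{col1}, applied to $\bar{E}$, yields a bounded linear bijection $\bar{T}$ on $\bigoplus_{n=1}^\infty\mathcal{H}$ induced by $\bar{E}$. Its inverse is bounded and is represented by $\overline{E^{-1}}$. Linearity comes from Theorem \ref{th1}, whose Banach--Steinhaus and closed graph argument applies to any matrix mapping.

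\textbf{Step 3: $\ell^2(\mathbb{N})$.} Theorem \ref{th4}, applied to $\bar{E}$, gives a bounded bijection $\bar{U}$ on $\ell^2(\mathbb{N})$, with inverse represented by $\overline{E^{-1}}$. Theorem \ref{th4} itself relies on Lemma \ref{lem2}, Theorem \ref{th5} and Corollary \ref{col1}. Boundedness of the inverse follows from the Open Mapping Theorem, as in that proof.

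\textbf{Alternative route for Step 3.} One can avoid Theorem \ref{th4} by using the map $c\mapsto\bar c$ (coordinatewise conjugation) on $\ell^2(\mathbb{N})$. By Theorem \ref{th4} applied to $E$, the matrix $E$ induces a bounded bijection $U$ on $\ell^2(\mathbb{N})$. A direct check shows $\bar{E}c=\overline{U\bar c}$. Since conjugation is a bijective isometry, the composite $c\mapsto\overline{U\bar c}$ is linear, bounded and bijective.

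\textbf{Main obstacle.} The only real point is that Step 1 uses the whole of Theorem \ref{th6}, in particular that $\bar{E}$ is a genuine matrix mapping with image inside $\bigoplus_{n=1}^\infty\mathcal{H}$. Once that theorem is granted, the rest of the argument is bookkeeping.
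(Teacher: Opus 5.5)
Your proposal is correct and matches the paper's argument. The paper's proof says only that the corollary is a direct consequence of Theorem~\ref{th6} (transferring the matrix-mapping property, invertibility and surjectivity from $E$ to $\bar{E}$) together with Corollary~\ref{col1} and Theorem~\ref{th4} applied to $\bar{E}$, which is exactly your Steps 1--3. Your alternative for the $\ell^2(\mathbb{N})$ part, writing the induced operator as $c\mapsto\overline{U\bar c}$, is also valid, and it has the small advantage of not needing Theorem~\ref{th6} for that half.
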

\begin{proof}
A direct result of Theorem \ref{th6}, Corollary \ref{col1} and Theorem \ref{th4}.
\end{proof}
\begin{theorem}
Suppose that $E=\left(E_{n,k}\right)_{n,k\geq1}$ defines a matrix mapping on $\bigoplus_{n=1}^\infty\mathcal{H}$ and $\bar{E}=\left(\overline{E_{n,k}}\right)_{n,k\geq1}$. The following statements hold
\begin{itemize}
\item[$(i)$] If $T$ and $\bar{T}$ are the bounded linear operators on $\bigoplus_{n=1}^\infty\mathcal{H}$, which induced by $E$ and $\bar{E}$, respectively, then $\|T\|=\|\bar{T}\|$.
\item[$(ii)$]If $U$ and $\bar{U}$ are the bounded linear operators on $\ell^2(\mathbb{N})$, which induced by $E$ and $\bar{E}$, respectively, then $\|U\|=\|\bar{U}\|$.
\end{itemize}
\end{theorem}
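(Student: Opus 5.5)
The plan is to use the conjugation isometry of Remark \ref{rem1} to intertwine $T$ with $\bar{T}$ and $U$ with $\bar{U}$. By Theorem \ref{th6}, $\bar{E}$ also defines a matrix mapping on $\bigoplus_{n=1}^\infty\mathcal{H}$. Theorem \ref{th1} then shows that $\bar{T}$ is bounded, and Lemma \ref{lem2} together with Theorem \ref{th5} shows that $U$ and $\bar{U}$ are bounded operators on $\ell^2(\mathbb{N})$. So all four norms are finite, and it suffices to prove $\|\bar{T}\|\le\|T\|$ and $\|\bar{U}\|\le\|U\|$. The reverse inequalities follow by symmetry, because $\bar{\bar{E}}=E$.

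\textbf{Part $(i)$.} Let $J$ denote the conjugation $\Psi\mapsto\bar{\Psi}$ on $\bigoplus_{n=1}^\infty\mathcal{H}$. By \eqref{eq13}, $J$ is a conjugate linear isometry and an involution, so it is a bijection preserving norms. The key identity is
\begin{equation*}
\bar{T}\Psi=\overline{T\bar{\Psi}}\qquad\left(\Psi\in\textstyle\bigoplus_{n=1}^\infty\mathcal{H}\right).
\end{equation*}
To prove it, fix $n$. The series $\sum_k E_{n,k}\bar{\psi_k}$ converges because $E$ is a matrix mapping. Since $f\mapsto\bar{f}$ is continuous and conjugate linear, it passes through the limit of partial sums, exactly as in \eqref{eq15}. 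This gives $\overline{\sum_k E_{n,k}\bar{\psi_k}}=\sum_k\overline{E_{n,k}}\psi_k=(\bar{T}\Psi)_n$. Consequently $\|\bar{T}\Psi\|=\|T\bar{\Psi}\|\le\|T\|\,\|\bar{\Psi}\|=\|T\|\,\|\Psi\|$, and hence $\|\bar{T}\|\le\|T\|$.

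\textbf{Part $(ii)$.} The argument is the same, with coordinatewise complex conjugation $c\mapsto\bar{c}$ on $\ell^2(\mathbb{N})$, which is clearly a conjugate linear isometric involution. Complex conjugation is continuous on $\mathbb{C}$, so $\overline{\sum_k E_{n,k}\overline{c_k}}=\sum_k\overline{E_{n,k}}c_k$ for each $n$. This gives $\bar{U}c=\overline{U\bar{c}}$, and therefore $\|\bar{U}c\|_{\ell^2}=\|U\bar{c}\|_{\ell^2}\le\|U\|\,\|c\|_{\ell^2}$.

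The only delicate point is justifying the interchange of conjugation with the infinite sum. It is handled by the convergence of $\sum_k E_{n,k}\bar{\psi_k}$ (respectively $\sum_k E_{n,k}\overline{c_k}$) and the continuity of conjugation. Everything else is formal.
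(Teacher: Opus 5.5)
Your proposal is correct and follows essentially the paper's argument. The paper also uses the conjugation of Remark~\ref{rem1} to get $\overline{T\Psi}=\bar{T}\bar{\Psi}$, which is equivalent to your $\bar{T}\Psi=\overline{T\bar{\Psi}}$ since conjugation is an involution; it then equates the two suprema directly, using that conjugation is a norm-preserving bijection, where you prove one inequality and invoke symmetry. Your explicit treatment of part $(ii)$ via coordinatewise conjugation on $\ell^2(\mathbb{N})$ fills in what the paper only calls ``similar.''
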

\begin{proof}
We only prove $(i)$. The proof of $(ii)$ is similar. 

For given $\Psi\in\bigoplus_{n=1}^\infty\mathcal{H}$, let $\Psi\longrightarrow\bar{\Psi}$ be the conjugate linear isometry we defined in Remark \ref{rem1}. Then for fixed $n\in\mathbb{N}$,
\begin{align*}
\overline{\left(T\Psi\right)_n}=\overline{\sum_{k=1}^\infty E_{n,k}\psi_k}=\sum_{k=1}^\infty\overline{E_{n,k}}\bar{\psi_k}=\left(\bar{T}\bar{\Psi}\right)_n.
\end{align*}
Hence, $\overline{T\Psi}=\bar{T}\bar{\Psi}$. Noting that $\left\|\bar{\Psi}\right\|_{\bigoplus\mathcal{H}}=\left\|\Psi\right\|_{\bigoplus\mathcal{H}}$ for all $\Psi\in\bigoplus_{n=1}^\infty\mathcal{H}$, we have
\begin{align*}
\left\|T\right\|=\sup_{\left\|\Psi\right\|_{\bigoplus\mathcal{H}}\leq1}\left\|T\Psi\right\|_{\bigoplus\mathcal{H}}=\sup_{\left\|\Psi\right\|_{\bigoplus\mathcal{H}}\leq1}\left\|\overline{T\Psi}\right\|_{\bigoplus\mathcal{H}}=\sup_{\left\|\bar{\Psi}\right\|_{\bigoplus\mathcal{H}}\leq1}\left\|\bar{T}\bar{\Psi}\right\|_{\bigoplus\mathcal{H}}=\left\|\bar{T}\right\|.
\end{align*}
\end{proof}
  In the next result, we investigate, when a Bessel sequence is an $E$-Bessel sequence?
\begin{corollary}\label{colEbes}
Let $\mathcal{F}=\lbrace f_{k}\rbrace_{k=1}^{\infty}$ be a Bessel sequence on $\mathcal{H}$ with bound $B$. Suppose that $E=(E_{n,k})_{n,k\geq 1}$ is a matrix mapping on $\bigoplus_{n=1}^\infty\mathcal{H}$. Then $\mathcal{F}$ is an $E$-Bessel sequence with bound $\left\|\bar{E}\right\|^2B$.
\end{corollary}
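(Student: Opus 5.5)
We would follow the scheme of the upper-bound part of the proof of Theorem \ref{th3}, using $\bar{E}$ in place of $E$ so that no conjugations need to be absorbed.

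\textbf{Well-definedness.} First we check that $\mathcal{F}$ is an $E$-sequence. Since $E$ is a matrix mapping on $\bigoplus_{n=1}^\infty\mathcal{H}$, Theorem \ref{th1} gives a bounded operator on $\bigoplus_{n=1}^\infty\mathcal{H}$. Lemma \ref{lem3} then shows that every row $\{E_{n,k}\}_{k=1}^\infty$ lies in $\ell^2(\mathbb{N})$. Because $\mathcal{F}$ is Bessel, its synthesis operator is bounded on $\ell^2(\mathbb{N})$, so $\sum_{k}E_{n,k}f_k$ converges in $\mathcal{H}$ for each $n$ (as in \cite[Corollary 3.2.5]{ch}, used in Theorem \ref{th3}).

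\textbf{Bounded operator $\bar{U}$.} By Theorem \ref{th6}, $\bar{E}$ is also a matrix mapping on $\bigoplus_{n=1}^\infty\mathcal{H}$. Lemma \ref{lem2} and Theorem \ref{th5} then show that $\bar{E}$ induces a bounded operator $\bar{U}$ on $\ell^2(\mathbb{N})$. We write $\|\bar{E}\|$ for $\|\bar{U}\|$, which equals $\|U\|$ by the preceding theorem.

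\textbf{The estimate.} Fix $f\in\mathcal{H}$. Since $\{\langle f,f_k\rangle\}_{k=1}^\infty\in\ell^2(\mathbb{N})$ and the inner product is continuous,
\begin{align*}
\sum_{n=1}^\infty\left|\left\langle f,(E\mathcal{F})_n\right\rangle\right|^2=\sum_{n=1}^\infty\left|\sum_{k=1}^\infty\overline{E_{n,k}}\langle f,f_k\rangle\right|^2=\left\|\bar{U}\{\langle f,f_k\rangle\}_{k=1}^\infty\right\|_{\ell^2}^2 .
\end{align*}
This is bounded by $\|\bar{E}\|^2\sum_k|\langle f,f_k\rangle|^2\le\|\bar{E}\|^2B\|f\|^2$. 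Hence $\mathcal{F}$ is an $E$-Bessel sequence with bound $\|\bar{E}\|^2B$.

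The main point needing care is pulling the inner product through the infinite series, which requires its convergence in $\mathcal{H}$. This is exactly why the first step combines the $\ell^2$-row property from Lemma \ref{lem3} with the Bessel property. The rest is bookkeeping with Theorem \ref{th6}, Lemma \ref{lem2} and Theorem \ref{th5}.
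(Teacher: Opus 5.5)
Your proposal is correct and follows essentially the same route as the paper. You show that $\mathcal{F}$ is an $E$-sequence via Lemma \ref{lem3} and the Bessel property (\cite[Corollary 3.2.5]{ch}), get boundedness of $\bar{E}$ on $\ell^2(\mathbb{N})$ from Theorem \ref{th6}, Lemma \ref{lem2} and Theorem \ref{th5}, and then identify $\sum_n|\langle f,(E\mathcal{F})_n\rangle|^2$ with $\|\bar{E}\{\langle f,f_k\rangle\}_{k}\|_{\ell^2}^2$; your explicit appeal to Theorem \ref{th1} before Lemma \ref{lem3}, to match that lemma's hypothesis, is only slightly more careful bookkeeping of the same argument.
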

\begin{proof}
Take $f\in\mathcal{H}$ arbitrarily. Note that $\left\{\left<f,f_k\right>\right\}_{k=1}^\infty\in\ell^2(\mathbb{N})$ since $\mathcal{F}$ is a Bessel sequence. As a consequence of Lemma \ref{lem2}, Theorem \ref{th5} and Theorem \ref{th6}, $\bar{E}$ induces a bounded operator on $\ell^2(\mathbb{N})$. We denote it by $\bar{E}$ again. Since by Lemma \ref{lem3}, $\left\{E_{n,k}\right\}_{k=1}^\infty\in\ell^2(\mathbb{N})$, hence \cite[Corollary 3.2.5]{ch} implies that $\sum_{k=1}^\infty E_{n,k}f_k$ is convergent for all $n\in\mathbb{N}$. This means that $\mathcal{F}$ is an $E$-sequence. Therefore,
 \begin{align*}
 	\sum_{n=1}^{\infty}\left\vert\left\langle f,\left(E\mathcal{F}\right)_{n}\right\rangle\right\vert^{2}&=\sum_{n=1}^{\infty}\left\vert\left\langle f,\sum_{k=1}^{\infty}E_{n,k}f_{k}\right\rangle\right\vert^{2}\\&=\sum_{n=1}^{\infty}\left\vert\sum_{k=1}^{\infty}\overline{E_{n,k}}\left\langle f,f_{k}\right\rangle\right\vert^{2}\\&=\sum_{n=1}^{\infty}\left\vert\left(\bar{E}\left\lbrace\left\langle f,f_{k}\right\rangle\right\rbrace_{k=1}^{\infty}\right)_{n}\right\vert^{2}\\&=\left\Vert\bar{E}\left\lbrace\left\langle f,f_{k}\right\rangle\right\rbrace_{k=1}^{\infty}\right\Vert_{\ell^2}^{2}\\&\leq\left\|\bar{E}\right\|^2\sum_{k=1}^\infty\left|\left<f,f_k\right>\right|^2\leq\left\|\bar{E}\right\|^2B\|f\|^2.
 \end{align*}
\end{proof}
\begin{corollary}\label{colEt}
Assume that $E=\left(E_{n,k}\right)_{n,k\geq1}$ is an invertible matrix which defines a surjective matrix mapping on $\bigoplus_{n=1}^\infty\mathcal{H}$. Then $E^t=\left(E^t_{n,k}\right)_{n,k\geq1}$, where $E^t_{n,k}=E_{k,n}$ for all $n,k\in\mathbb{N}$, induces a bounded linear bijections on $\bigoplus_{n=1}^\infty\mathcal{H}$ and $\ell^2(\mathbb{N})$.
\end{corollary}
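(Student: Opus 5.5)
The plan is to pass to the adjoint operator on $\ell^2(\mathbb{N})$ and then lift the resulting bijection to $\bigoplus_{n=1}^\infty\mathcal{H}$, using the identity $E^t=\overline{E^\ast}$ and Corollary \ref{col2}.

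\textbf{Step 1: $E^\ast$ on $\ell^2(\mathbb{N})$.} By Theorem \ref{th4}, $E$ induces a bounded bijection $U$ on $\ell^2(\mathbb{N})$ with $U^{-1}$ represented by $E^{-1}$. Hence the Hilbert space adjoint $U^\ast$ is a bounded bijection with bounded inverse $(U^{-1})^\ast$. By the remark at the end of Lemma \ref{lem3}, $U^\ast$ is represented by the conjugate transpose matrix $E^\ast=(\overline{E_{k,n}})$. Indeed,
\[
\langle U^\ast e_k,e_n\rangle=\langle e_k,Ue_n\rangle=\overline{E_{k,n}},
\]
and the rows of $E^\ast$ lie in $\ell^2(\mathbb{N})$ by Lemma \ref{lem3}, so $(U^\ast c)_n=\sum_k\overline{E_{k,n}}c_k$ for every $c\in\ell^2(\mathbb{N})$.

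\textbf{Step 2: $E^t$ on $\ell^2(\mathbb{N})$.} Since $E^t=\overline{E^\ast}$, conjugate $U^\ast$ coordinatewise, as in the proof of the preceding theorem. Setting $Vc=\overline{U^\ast\bar c}$ gives a bounded linear bijection of $\ell^2(\mathbb{N})$, represented by $E^t$. Its inverse $c\mapsto\overline{(U^{-1})^\ast\bar c}$ is represented by $(E^{-1})^t$. The identities $(E^{-1})^tE^t=(EE^{-1})^t=I$ and $E^t(E^{-1})^t=I$ are just the transposed entrywise identities \eqref{eq16}, so $E^t$ is invertible as a matrix.

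\textbf{Step 3: $E^t$ on $\bigoplus_{n=1}^\infty\mathcal{H}$.} The natural route is to reduce to an adjoint on the direct sum. Let $T$ be the bounded bijection induced by $E$ on $\bigoplus_{n=1}^\infty\mathcal{H}$ (Corollary \ref{col1}). Its adjoint $T^\ast$ is again a bounded bijection. For $\Psi,\Phi$ in the direct sum,
\[
\langle T\Psi,\Phi\rangle=\sum_n\Big\langle\sum_kE_{n,k}\psi_k,\phi_n\Big\rangle=\sum_k\Big\langle\psi_k,\sum_n\overline{E_{n,k}}\phi_n\Big\rangle .
\]
Hence $(T^\ast\Phi)_k=\sum_n\overline{E_{n,k}}\phi_n$, so $T^\ast$ is represented by $E^\ast$.

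The interchange of sums is the main obstacle. I would first verify the formula for finitely supported $\Phi$, where the $n$-sum is finite and continuity of the inner product suffices. I would then extend to general $\Phi$: both sides are continuous in $\Phi$, the left side because $T^\ast$ is bounded, and the right side coordinatewise by Cauchy--Schwarz together with the $\ell^2$ columns from Lemma \ref{lem3}.

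Finally, apply the conjugate linear isometry of Remark \ref{rem1} coordinatewise. The map $\Phi\mapsto\overline{T^\ast\bar\Phi}$ is a bounded linear bijection of $\bigoplus_{n=1}^\infty\mathcal{H}$, represented by $\overline{E^\ast}=E^t$. This uses the continuity identity \eqref{eq15}, $\overline{\sum_n a_n\phi_n}=\sum_n\bar a_n\bar\phi_n$. Its inverse is represented by $(E^{-1})^t$. This shows that $E^t$ defines a surjective matrix mapping on the direct sum, and Corollary \ref{col1} and Theorem \ref{th4} then confirm both bijectivity statements.
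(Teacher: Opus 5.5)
Your proposal is correct and follows essentially the paper's route. You pass to the adjoint of the induced bijection, which is represented by $E^\ast$, and then use $E^t=\overline{E^\ast}$ together with the coordinatewise conjugation of Remark \ref{rem1} (the mechanism behind Theorem \ref{th6}) to obtain bounded bijections represented by $E^t$, with inverses represented by $(E^{-1})^t$, on both $\bigoplus_{n=1}^\infty\mathcal{H}$ and $\ell^2(\mathbb{N})$; your one addition is that you actually justify that $T^\ast$ is represented by $E^\ast$ (finitely supported $\Phi$ plus a continuity argument), which the paper only asserts.
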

\begin{proof}
$E$ induces a bounded bijection $T$ on $\bigoplus_{n=1}^\infty\mathcal{H}$ by Corollary \ref{col1}. $T^\ast$ is also a bounded bijection and induced by $E^\ast$. Noting that $E^t=\overline{E^\ast}$ and $\left(E^t\right)^{-1}=\left(E^{-1}\right)^t$, $E^t$ induces a bounded bijection on $\bigoplus_{n=1}^\infty\mathcal{H}$ by Theorem \ref{th6}. For the rest of the proof, apply Theorem \ref{th4} to conclude that $E$ induces a bijective operator on $\ell^2(\mathbb{N})$, and then proceed with the argument as in the previous part. 
\end{proof}
\begin{lemma}\label{lemabs}
Suppose that $E=\left(E_{n,k}\right)_{n,k\geq1}$ is a matrix mapping on $\bigoplus_{n=1}^\infty\mathcal{H}$ such that
\begin{align}\label{eq25}
\sum_{n=1}^\infty\left(\sum_{k=1}^\infty\left|E_{n,k}\right|\right)^2<\infty.
\end{align}
Let $\Psi=\{\psi_k\}_{k=1}^\infty$ be a norm bounded above sequence in $\mathcal{H}$. Then for each scalar sequence
$\{c_k\}_{k=1}^\infty\in\ell^2(\mathbb{N})$, the series
\begin{align*}
\sum_{n=1}^\infty\sum_{k=1}^\infty c_nE_{n,k}\psi_k,
\end{align*} 
is absolutely convergent.
\end{lemma}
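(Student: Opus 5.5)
Let $M:=\sup_k\Vert\psi_k\Vert<\infty$. We bound the double series of norms directly:
\begin{align*}
\sum_{n=1}^\infty\sum_{k=1}^\infty\left\Vert c_nE_{n,k}\psi_k\right\Vert\leq M\sum_{n=1}^\infty |c_n|\sum_{k=1}^\infty|E_{n,k}|.
\end{align*}
Set $r_n=\sum_{k=1}^\infty|E_{n,k}|$. By \eqref{eq25}, $\{r_n\}_{n=1}^\infty\in\ell^2(\mathbb{N})$; in particular each $r_n$ is finite. The Cauchy--Schwarz inequality in $\ell^2(\mathbb{N})$ then gives
\begin{align*}
\sum_{n=1}^\infty|c_n|r_n\leq\left(\sum_{n=1}^\infty|c_n|^2\right)^{\frac12}\left(\sum_{n=1}^\infty r_n^2\right)^{\frac12}<\infty .
\end{align*}
Hence the double series of norms is finite.

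From this we draw the consequences. For each fixed $n$, the inner series $\sum_k c_nE_{n,k}\psi_k$ converges absolutely in $\mathcal{H}$, since its norm series is at most $M|c_n|r_n$. The outer series of the inner sums is then dominated by $M\sum_n|c_n|r_n$, so it converges absolutely as well. Because all terms are nonnegative, Tonelli's theorem for series lets us rearrange the sum over $(n,k)$ freely, so the double series is absolutely convergent in every sense: iterated in either order, or as an unordered sum over $\mathbb{N}\times\mathbb{N}$. Completeness of $\mathcal{H}$ ensures that absolute convergence implies convergence.

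There is essentially no obstacle here. The only care needed is to state exactly what "absolutely convergent" means for a double series of vectors, namely summability of the norms. The hypothesis that $E$ is a matrix mapping on $\bigoplus_{n=1}^\infty\mathcal{H}$ is not needed for this estimate.
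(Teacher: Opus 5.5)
Your proof is correct and follows essentially the same route as the paper: bound $\Vert\psi_k\Vert$ by its supremum and apply the Cauchy--Schwarz inequality in $\ell^2(\mathbb{N})$ to $\{|c_n|\}$ and $\{\sum_k|E_{n,k}|\}$. The paper applies Cauchy--Schwarz before extracting the supremum, which is immaterial, and your remark that the matrix-mapping hypothesis is never used is accurate.
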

\begin{proof}
Since $\Psi$ is norm bounded above, thus there exists some $b$ such that $\sup_k\left\|\psi_k\right\|\leq b<\infty.$ Moreover
\begin{align*}
\sum_{n=1}^\infty\sum_{k=1}^\infty\left\|c_nE_{n,k}\psi_k\right\|&=\sum_{n=1}^\infty\left|c_n\right|\sum_{k=1}^\infty\left|E_{n,k}\right|\left\|\psi_k\right\|\\&\leq\left\{\sum_{n=1}^\infty\left|c_n\right|^2\right\}^\frac{1}{2}\left\{\sum_{n=1}^\infty\left(\sum_{k=1}^\infty\left|E_{n,k}\right|\left\|\psi_k\right\|\right)^2\right\}^\frac{1}{2}\\&\leq b\left\{\sum_{n=1}^\infty\left|c_n\right|^2\right\}^\frac{1}{2}\left\{\sum_{n=1}^\infty\left(\sum_{k=1}^\infty\left|E_{n,k}\right|\right)^2\right\}^\frac{1}{2}<\infty.
\end{align*}
\end{proof}
\begin{theorem}\label{Efop}
Let $E$ be a matrix mapping on $\bigoplus_{n=1}^\infty\mathcal{H}$ and $\mathcal{F}=\{f_k\}_{k=1}^\infty$ be a Bessel sequence in $\mathcal{H}$. \begin{itemize}
\item[(i)] For each $f\in\mathcal{H}$, $T_{E\mathcal{F}}^{\ast}f=\bar{E}T_\mathcal{F}^{\ast}f.$
\item[(ii)] If $E$ additionally satisfies \eqref{eq25}, then for each sequence $\lbrace c_n\rbrace_{n=1}^{\infty}\in\ell^2(\mathbb{N})$, 
\begin{equation*}
T_{E\mathcal{F}}\left\lbrace c_n\right\rbrace_{n=1}^{\infty}=T_\mathcal{F}E^t\left\lbrace c_n\right\rbrace_{n=1}^{\infty}.
\end{equation*}
\end{itemize}
\end{theorem}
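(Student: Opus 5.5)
For part (i), I will fix $f\in\mathcal{H}$ and compare the two sequences coordinatewise. As in the proof of Corollary \ref{colEbes}, Lemma \ref{lem3} gives $\{E_{n,k}\}_{k=1}^\infty\in\ell^2(\mathbb{N})$ for every $n$. Since $\mathcal{F}$ is Bessel, \cite[Corollary 3.2.5]{ch} then shows that each series $(E\mathcal{F})_n=\sum_{k}E_{n,k}f_k$ converges, so $T_{E\mathcal{F}}^{\ast}f$ is well defined coordinatewise. By continuity of the inner product we get
\begin{equation*}
\left(T_{E\mathcal{F}}^{\ast}f\right)_n=\left\langle f,\sum_{k=1}^\infty E_{n,k}f_k\right\rangle=\sum_{k=1}^\infty\overline{E_{n,k}}\left\langle f,f_k\right\rangle=\left(\bar{E}\,T_{\mathcal{F}}^{\ast}f\right)_n .
\end{equation*}
Here $T_{\mathcal{F}}^{\ast}f=\{\langle f,f_k\rangle\}_k\in\ell^2(\mathbb{N})$. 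By Theorem \ref{th6}, Lemma \ref{lem2} and Theorem \ref{th5}, $\bar{E}$ acts as a bounded operator on $\ell^2(\mathbb{N})$, so the right-hand side is a genuine $\ell^2$ vector. Agreement in every coordinate gives (i). This is exactly the computation already carried out in Corollary \ref{colEbes}.

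For part (ii), I will use the absolute convergence supplied by Lemma \ref{lemabs}. A Bessel sequence with bound $B$ is norm bounded, since $\|f_k\|^2\le B$. So for $c\in\ell^2(\mathbb{N})$ the double series $\sum_n\sum_k c_nE_{n,k}f_k$ converges absolutely, and its order of summation may be interchanged. One point needs checking first: $T_{E\mathcal{F}}$ must be defined on $c$. Corollary \ref{colEbes} shows $\mathcal{F}$ is $E$-Bessel, so $T_{E\mathcal{F}}$ is bounded and $T_{E\mathcal{F}}c=\sum_n c_n(E\mathcal{F})_n=\sum_n\sum_k c_nE_{n,k}f_k$. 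Swapping the order gives $\sum_k\big(\sum_n E_{n,k}c_n\big)f_k=\sum_k (E^tc)_k f_k$.

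The remaining step is to identify this last expression with $T_{\mathcal{F}}(E^tc)$. This requires $E^tc\in\ell^2(\mathbb{N})$, because $T_{\mathcal{F}}$ is only defined on $\ell^2$; it also ensures that $\sum_k (E^tc)_kf_k$ converges unconditionally to $T_{\mathcal{F}}(E^tc)$, matching the iterated sum. I expect this to be the main obstacle. Corollary \ref{colEt} needs invertibility and surjectivity, which are not assumed here, so I will not rely on it. I will instead use condition \eqref{eq25}. It gives $\sum_n|E_{n,k}|^2\le\sum_n(\sum_j|E_{n,j}|)^2<\infty$, and more usefully $|(E^tc)_k|\le\sum_n|c_n||E_{n,k}|$. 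Summing over $k$ and applying Cauchy--Schwarz,
\begin{equation*}
\sum_k|(E^tc)_k|\le\|c\|_{\ell^2}\Big(\sum_n\big(\textstyle\sum_k|E_{n,k}|\big)^2\Big)^{1/2},
\end{equation*}
so $E^tc\in\ell^1\subset\ell^2$. Hence $T_{\mathcal{F}}(E^tc)$ makes sense. Since $\sum_k|(E^tc)_k|\|f_k\|<\infty$, the series $\sum_k (E^tc)_kf_k$ converges absolutely, so its value is independent of the order of summation and equals $T_{\mathcal{F}}(E^tc)$. This completes (ii).
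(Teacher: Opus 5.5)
Your proof is correct and has the same skeleton as the paper's. For (i) it is a coordinatewise computation using continuity of the inner product, and for (ii) it uses Lemma \ref{lemabs} and then interchanges the order of summation.

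The difference is in how you justify that $\bar{E}$ and $E^t$ act on $\ell^2(\mathbb{N})$.
\begin{itemize}
\item The paper cites Corollary \ref{col2} for $\bar{E}$ and Corollary \ref{colEt} for $E^t$. Both corollaries assume that $E$ is invertible and defines a surjective matrix mapping, and neither assumption is among this theorem's hypotheses.
\item For $\bar{E}$ you use Theorem \ref{th6}, Lemma \ref{lem2} and Theorem \ref{th5} instead, as in Corollary \ref{colEbes}.
\item For $E^t$ you derive the needed estimate directly from \eqref{eq25}: $\|E^tc\|_{\ell^2}\le\|E^tc\|_{\ell^1}\le\bigl(\sum_n(\sum_k|E_{n,k}|)^2\bigr)^{1/2}\|c\|_{\ell^2}$.
\end{itemize}

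Your route gains two things. The theorem is proved under exactly its stated hypotheses, since the estimate shows $E^t$ is bounded on $\ell^2(\mathbb{N})$ under \eqref{eq25} alone. Also, the absolute convergence of $\sum_k(E^tc)_kf_k$ makes the identification of the interchanged double sum with $T_{\mathcal{F}}(E^tc)$ immediate.
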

\begin{proof}
First note that by Corollary \ref{colEbes}, $\mathcal{F}$ is an $E$-Bessel sequence. Thus $T_{E\mathcal{F}}$ and $T^\ast_{E\mathcal{F}}$ are well defined. Also Corollary \ref{col2} states that $\bar{E}$ also defines a bounded operator on $\ell^2(\mathbb{N})$. Now, noting that $\left\lbrace\left\langle f,f_k\right\rangle\right\rbrace_{k=1}^{\infty}$ belongs to $\ell^2(\mathbb{N})$, and using the continuity of the inner product, we obtain 
\begin{align*}
T_{E\mathcal{F}}^{\ast}f&=\left\lbrace\left\langle f,\left(E\mathcal{F}\right)_n\right\rangle\right\rbrace_{n=1}^{\infty}=\left\lbrace\left\langle f,\sum_{k=1}^{\infty}E_{n,k}f_k\right\rangle\right\rbrace_{n=1}^{\infty}\\&=\left\lbrace\sum_{k=1}^{\infty}\overline{E_{n,k}}\left\langle f,f_k\right\rangle\right\rbrace_{n=1}^{\infty}=\bar{E}\left\lbrace\left\langle f,f_k\right\rangle\right\rbrace_{k=1}^{\infty}\\&=\bar{E}T_\mathcal{F}^{\ast}f.
\end{align*}
To prove part $(ii)$, we note that by Corollary \ref{colEt}, $E^t$ also induces a bounded operator on $\ell^2(\mathbb{N})$. Hence, applying Lemma \ref{lemabs} and the Fubini--Tonelli Theorem, we have
\begin{align*}
T_{E\mathcal{F}}\left\lbrace c_n\right\rbrace_{n=1}^{\infty}&=\sum_{n=1}^{\infty}c_n\left(E\Psi\right)_n=\sum_{n=1}^{\infty}\sum_{k=1}^{\infty
}c_{n}E_{n,k}\psi_k\\&=\sum_{k=1}^{\infty
}\left(\sum_{n=1}^{\infty}c_{n}E_{n,k}\right)\psi_k=T\left\lbrace\sum_{n=1}^{\infty}c_{n}E_{n,k}\right\rbrace_{k=1}^{\infty}\\&=T_\mathcal{F}E^{t}\left\lbrace c_n\right\rbrace_{n=1}^{\infty}.
\end{align*}
\end{proof}
\begin{theorem}\label{thl2plus}
Let $E$ be a matrix mapping on $\ell^2(\mathbb{N})$ and $\mathcal{H}$ be a Hilbert space. Then $E$ induces a bounded linear operator on $\bigoplus_{n=1}^\infty\mathcal{H}$. If $E$ acts surjectively on $\ell^2(\mathbb{N})$, then the operator induced on $\bigoplus_{n=1}^\infty\mathcal{H}$ is surjective.
\end{theorem}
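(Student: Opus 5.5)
By Theorem \ref{th5}, $E$ induces a bounded operator $U$ on $\ell^2(\mathbb{N})$. The plan is to transfer $U$ to $\bigoplus_{n=1}^\infty\mathcal{H}$ coordinatewise through an orthonormal basis $\{e_j\}_{j\in J}$ of $\mathcal{H}$ ($J$ countable, since $\mathcal{H}$ is separable). The Hilbert space $\bigoplus_{n=1}^\infty\mathcal{H}$ is unitarily equivalent to $\bigoplus_{j\in J}\ell^2(\mathbb{N})$ via
$$\Psi=\{\psi_k\}_{k=1}^\infty\mapsto\big\{\{\langle\psi_k,e_j\rangle\}_{k=1}^\infty\big\}_{j\in J}.$$
Under this identification, the action of $E$ should be $U\oplus U\oplus\cdots$. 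Such a diagonal operator is bounded with norm $\|U\|$ and is surjective when $U$ is. The proof consists of making this precise at the level of the series $\sum_k E_{n,k}\psi_k$.

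\textbf{Step 1: convergence of each row series.} Fix $\Psi\in\bigoplus_{n=1}^\infty\mathcal{H}$ and $n\in\mathbb{N}$. The argument of Lemma \ref{lem3} uses only boundedness on $\ell^2(\mathbb{N})$: the functional $c\mapsto(Uc)_n$ is bounded, so by the Riesz representation theorem each row $\{E_{n,k}\}_{k=1}^\infty$ lies in $\ell^2(\mathbb{N})$. Cauchy--Schwarz then gives $\sum_k|E_{n,k}|\|\psi_k\|<\infty$. Hence $\sum_k E_{n,k}\psi_k$ converges absolutely in $\mathcal{H}$, and $\Psi$ is an $E$-sequence.

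\textbf{Step 2: boundedness.} For each $j$, continuity of the inner product gives
$$\Big\langle\sum_k E_{n,k}\psi_k,e_j\Big\rangle=\big(Uc^{(j)}\big)_n,\qquad\text{where } c^{(j)}=\{\langle\psi_k,e_j\rangle\}_{k=1}^\infty\in\ell^2(\mathbb{N}).$$
Applying Parseval's identity and then Tonelli to interchange the sums over $n$ and $j$,
$$\sum_n\Big\|\sum_k E_{n,k}\psi_k\Big\|^2=\sum_j\|Uc^{(j)}\|^2\le\|U\|^2\sum_j\|c^{(j)}\|^2=\|U\|^2\|\Psi\|^2.$$
So $E$ is a matrix mapping on $\bigoplus_{n=1}^\infty\mathcal{H}$. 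It is bounded by this inequality (or by Theorem \ref{th1}), and linearity is clear from absolute convergence.

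\textbf{Step 3: surjectivity.} Assume $U$ is onto and let $\Phi=\{\phi_n\}_{n=1}^\infty\in\bigoplus_{n=1}^\infty\mathcal{H}$. The Open Mapping Theorem gives a constant $K$ such that each $d\in\ell^2(\mathbb{N})$ has a preimage $c$ with $\|c\|\le K\|d\|$. For each $j$, put $d^{(j)}=\{\langle\phi_n,e_j\rangle\}_{n=1}^\infty$ and choose $c^{(j)}$ with $Uc^{(j)}=d^{(j)}$ and $\|c^{(j)}\|\le K\|d^{(j)}\|$. Define
$$\psi_k=\sum_j c^{(j)}_k e_j.$$
Since $\sum_k\sum_j|c^{(j)}_k|^2\le K^2\|\Phi\|^2<\infty$, each $\psi_k$ is well defined and $\Psi\in\bigoplus_{n=1}^\infty\mathcal{H}$. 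By the identity of Step 2, $\langle(E\Psi)_n,e_j\rangle=(Uc^{(j)})_n=\langle\phi_n,e_j\rangle$ for all $j$, hence $E\Psi=\Phi$.

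The main obstacle is this surjectivity step. The preimages must be chosen \emph{uniformly}: $U$ need not be injective, so $c^{(j)}$ is not unique, and an arbitrary choice could make $\sum_j\|c^{(j)}\|^2$ diverge. The Open Mapping bound supplies the needed control. Alternatively, one can take $c^{(j)}$ to be the minimal-norm preimage, obtained from the bounded inverse of $U$ restricted to $(\ker U)^\perp$. The coordinate identity in Step 2 is routine once Step 1 provides absolute convergence.
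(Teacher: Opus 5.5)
Your proposal is correct. Steps 1 and 2 coincide with the paper's argument. The paper also invokes Lemma \ref{lem3} for the rows (whose proof, as you observe, only uses boundedness on $\ell^2(\mathbb{N})$). It then bounds $\sum_n\|(E\Psi)_n\|^2$ by $\|U\|^2\|\Psi\|^2$ through the same coordinate identity $\langle (E\Psi)_n,e_m\rangle=(Uc^m)_n$, followed by Parseval and Tonelli.

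The routes diverge at surjectivity.

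\textbf{The paper's route (duality).} Surjectivity of $U$ gives $\|U^\ast c\|\ge K\|c\|$. The coordinate identity, applied to the conjugate-transpose matrix $E^\ast$, transfers this lower bound to $T^\ast$ on $\bigoplus_{n=1}^\infty\mathcal{H}$. Finally, $T^\ast$ bounded below implies $T$ onto.

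\textbf{Your route (direct construction).} You use the Open Mapping constant to choose the preimages $c^{(j)}$ with uniform norm control, and assemble $\psi_k=\sum_j c^{(j)}_k e_j$.

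\textbf{What your route buys.}
\begin{itemize}
\item It avoids the onto/bounded-below duality.
\item It avoids identifying the Hilbert-space adjoint of $T$ with the operator induced by $E^\ast$. The paper asserts this without proof; it needs $E^\ast$ to be a matrix mapping on $\bigoplus_{n=1}^\infty\mathcal{H}$ and $(T^\ast\Phi)_k=\sum_n\overline{E_{n,k}}\phi_n$.
\item It yields the explicit estimate $\|\Psi\|\le K\|\Phi\|$ for the preimage.
\item It skips the paper's final chain of inequalities, which has a slip in the constant ($K\|U^\ast\|^2$ appears where $K^2$ is meant).
\end{itemize}

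You also correctly identify the one genuine subtlety in your route: since $U$ need not be injective, preimages must be chosen with uniform norm control, or $\sum_j\|c^{(j)}\|^2$ could diverge. The duality argument sidesteps this, because a lower bound for $T^\ast$ is automatically uniform.
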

\begin{proof}
By Theorem \ref{th5}, $E$ induces a bounded operator on $\ell^2(\mathbb{N})$ defined by
\begin{align*}
U\{c_k\}_{k=1}^\infty=\left\{\sum_{k=1}^\infty E_{n,k}c_k\right\}_{n=1}^\infty,\qquad\left(\{c_k\}_{k=1}^\infty\in\ell^2(\mathbb{N})\right).
\end{align*}
Hence, Lemma \ref{lem3} implies that $\left\{E_{n,k}\right\}_{k=1}^\infty\in\ell^2(\mathbb{N}).$ Now, for given $\Psi=\left\{\psi_k\right\}_{k=1}^\infty\in\bigoplus_{n=1}^\infty\mathcal{H}$ and each $n\in\mathbb{N}$,
\begin{align}\label{eq17}
\sum_{k=1}^\infty\left\|E_{n,k}\psi_k\right\|=\sum_{k=1}^\infty\left|E_{n,k}\right|\left\|\psi_k\right\|\leq\left(\sum_{k=1}^\infty\left|E_{n,k}\right|^2\right)^\frac{1}{2}\left(\sum_{k=1}^\infty\left\|\psi_k\right\|^2\right)^\frac{1}{2}<\infty.
\end{align}
\eqref{eq17} indicates that $E\Psi$ is well defined sequence in $\mathcal{H}$, for all $\Psi\in\bigoplus_{n=1}^\infty\mathcal{H}$. We also show $E\Psi\in\bigoplus_{n=1}^\infty\mathcal{H}$. To this purpose, let $\left\{e_j\right\}_{j=1}^\infty$ be an orthonormal basis for $\mathcal{H}$. So for each $k\in\mathbb{N}$, we have $\psi_k=\sum_{j=1}^\infty\left<\psi_k,e_j\right>e_j$. Now we fix $m\in\mathbb{N}$, and define the scalar sequence $c^m=\left\{c^m_k\right\}_{k=1}^\infty=\left\{\left<\psi_k,e_m\right>\right\}_{k=1}^\infty,$ which belongs to $\ell^2(\mathbb{N})$, since
\begin{align*}
\sum_{k=1}^\infty\left|\left<\psi_k,e_m\right>\right|^2\leq\sum_{k=1}^\infty\left\|\psi_k\right\|^2<\infty.
\end{align*}
Therefore, $Uc^m\in\ell^2(\mathbb{N})$ and $\left\|Uc^m\right\|_{\ell^2}\leq\left\|U\right\|\left\|c^m\right\|_{\ell^2}$, that is
\begin{align}\label{eq18}
\sum_{n=1}^\infty\left|\sum_{k=1}^\infty E_{n,k}\left<\psi_k,e_m\right>\right|^2\leq\left\|U\right\|^2\sum_{k=1}^\infty\left|\left<\psi_k,e_m\right>\right|^2\leq\left\|U\right\|^2\sum_{k=1}^\infty\left\|\psi_k\right\|^2.
\end{align}
On the other hand, for each $n\in\mathbb{N}$,
\begin{align}\label{eq19}
\left(E\Psi\right)_n=\sum_{k=1}^\infty E_{n,k}\psi_k=\sum_{k=1}^\infty E_{n,k}\left(\sum_{j=1}^\infty\left<\psi_k,e_j\right>e_j\right).
\end{align}
Using \eqref{eq19} and the continuity of the inner product of $\mathcal{H}$ we obtain
\begin{align}\label{eq20}
\left<\left(E\Psi\right)_n,e_m\right>&=\left<\sum_{k=1}^\infty E_{n,k}\left(\sum_{j=1}^\infty\left<\psi_k,e_j\right>e_j\right),e_m\right>\\&=\sum_{k=1}^\infty E_{n,k}\left<\sum_{j=1}^\infty\left<\psi_k,e_j\right>e_j,e_m\right>\nonumber\\&=\sum_{k=1}^\infty E_{n,k}\left<\psi_k,e_m\right>=\left(Uc^m\right)_n\nonumber.
\end{align}
Consequently, by Using \eqref{eq20} and \eqref{eq18} and applying the Fubini--Tonelli Theorem, we have
\begin{align*}
\sum_{n=1}^\infty\left\|\left(E\Psi\right)_n\right\|^2&=\sum_{n=1}^\infty\sum_{m=1}^\infty\left|\left<\left(E\Psi\right)_n,e_m\right>\right|^2=\sum_{n=1}^\infty\sum_{m=1}^\infty\left|\left(Uc^m\right)_n\right|^2\\&=\sum_{m=1}^\infty\sum_{n=1}^\infty\left|\left(Uc^m\right)_n\right|^2=\sum_{m=1}^\infty\left\|Uc^m\right\|_{\ell^2}^2\\&\leq\left\|U\right\|^2\sum_{m=1}^\infty\left\|c^m\right\|_{\ell^2}^2=\left\|U\right\|^2\sum_{m=1}^\infty\sum_{k=1}^\infty\left|\left<\psi_k,e_m\right>\right|^2\\&=\left\|U\right\|^2\sum_{k=1}^\infty\sum_{m=1}^\infty\left|\left<\psi_k,e_m\right>\right|^2=\left\|U\right\|^2\sum_{k=1}^\infty\left\|\psi_k\right\|^2<\infty,
\end{align*}
and so $E$ is a matrix mapping on $\bigoplus_{n=1}^\infty\mathcal{H}$ and by Theorem \ref{th1}, induces a bounded linear operator
\begin{equation*}
T:\bigoplus_{n=1}^\infty\mathcal{H}\longrightarrow\bigoplus_{n=1}^\infty\mathcal{H}\qquad;\qquad T\Psi=\left\{\sum_{k=1}^\infty E_{n,k}\psi_k\right\}_{n=1}^\infty.
\end{equation*}
As the last part of the proof, we assume that $E$ acts surjectively on $\ell^2(\mathbb{N})$. Then $U$ is surjective obviously and by \cite[Corollary 3.2.5]{ch} $U^\ast$ is bounded below, that is, there exists some $K>0$ such that
\begin{align}\label{eq21}
\left\|U^\ast c\right\|_{\ell^2}\geq K\left\|c\right\|_{\ell^2},\quad\forall c\in\ell^2(\mathbb{N}).
\end{align} 
Since the adjoint operator $U^\ast$ is induced by the matrix $E^\ast$ with $E^\ast_{n,k}=\overline{E_{k,n}}$ for all $n,k\in\mathbb{N}$, hence \eqref{eq21} means that there exists some $K>0$ such that
\begin{align}\label{eq22}
\sum_{n=1}^\infty\left|\sum_{k=1}^\infty E^\ast_{n,k}c_k\right|^2\geq K\left\|c\right\|_{\ell^2}^2\quad\forall c\in\ell^2(\mathbb{N}).
\end{align} 
Now we show that $T^\ast$ is bounded below. $T^\ast$ is induced by $E^\ast$ as follows
\begin{align*}
T^\ast:\bigoplus_{n=1}^\infty\mathcal{H}\longrightarrow\bigoplus_{n=1}^\infty\mathcal{H}\qquad;\qquad T^\ast\Psi=\left\{\sum_{k=1}^\infty E^\ast_{n,k}\psi_k\right\}_{n=1}^\infty.
\end{align*}
Thus for each $n\in\mathbb{N}$,
\begin{align}\label{eq23}
\left(T^\ast\Psi\right)_n=\sum_{k=1}^\infty E^\ast_{n,k}\psi_k=\sum_{k=1}^\infty E^\ast_{n,k}\left(\sum_{j=1}^\infty\left<\psi_k,e_j\right>e_j\right).
\end{align}
We note that the series in \eqref{eq23} is convergent and the inner product of $\mathcal{H}$ is continuous. Now, If we fix $m\in\mathbb{N}$, then
\begin{align}\label{eq24}
\left<\left(T^\ast\Psi\right)_n,e_m\right>&=\left<\sum_{k=1}^\infty E^\ast_{n,k}\left(\sum_{j=1}^\infty\left<\psi_k,e_j\right>e_j\right),e_m\right>\\&=\sum_{k=1}^\infty E^\ast_{n,k}\left<\sum_{j=1}^\infty\left<\psi_k,e_j\right>e_j,e_m\right>=\sum_{k=1}^\infty E^\ast_{n,k}\left<\psi_k,e_m\right>=\left(U^\ast c^m\right)_n\nonumber.
\end{align}
Therefore, using \eqref{eq24} and \eqref{eq22} and the Fubini--Tonelli Theorem, we conclude that
\begin{align*}
\left\|T^\ast\Psi\right\|_{\bigoplus\mathcal{H}}^2=\sum_{n=1}^\infty\left\|\left(T^\ast\Psi\right)_n\right\|^2&=\sum_{n=1}^\infty\sum_{m=1}^\infty\left|\left<\left(T^\ast\Psi\right)_n,e_m\right>\right|^2=\sum_{n=1}^\infty\sum_{m=1}^\infty\left|\left(U^\ast c^m\right)_n\right|^2\\&=\sum_{m=1}^\infty\sum_{n=1}^\infty\left|\left(U^\ast c^m\right)_n\right|^2=\sum_{m=1}^\infty\left\|U^\ast c^m\right\|^2\\&\geq K\left\|U^\ast\right\|^2\sum_{m=1}^\infty\left\|c^m\right\|_{\ell^2}^2=K\left\|U^\ast\right\|^2\sum_{m=1}^\infty\sum_{k=1}^\infty\left|\left<\psi_k,e_m\right>\right|^2\\&=K\left\|U^\ast\right\|^2\sum_{k=1}^\infty\sum_{m=1}^\infty\left|\left<\psi_k,e_m\right>\right|^2=K\left\|U^\ast\right\|^2\left\|\Psi\right\|_{\bigoplus\mathcal{H}}^2.
\end{align*}
In this way, we proved that $T^\ast$ is bounded below, and by \cite[Corollary 3.2.5]{ch}, $T$ is therefore surjective.
\end{proof}
\begin{corollary}
If $E$ is an invertible matrix which define a surjective matrix mapping on $\ell^2(\mathbb{N})$, then $E$ induces a bounded bijection $T$ on $\bigoplus_{n=1}^\infty\mathcal{H}$. Furthermore, $T^{-1}$ is bounded and $E^{-1}$ represents $T^{-1}$.
\end{corollary}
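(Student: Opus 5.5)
The plan is to combine Theorem \ref{thl2plus} with the algebraic-inverse argument already used in Corollary \ref{col1}. First, since $E$ defines a surjective matrix mapping on $\ell^2(\mathbb{N})$, Theorem \ref{thl2plus} shows that $E$ is a matrix mapping on $\bigoplus_{n=1}^\infty\mathcal{H}$. It also shows that the induced bounded linear operator $T$ (bounded by Theorem \ref{th1}) is surjective on $\bigoplus_{n=1}^\infty\mathcal{H}$. In other words, $E$ is an invertible matrix defining a surjective matrix mapping on $\bigoplus_{n=1}^\infty\mathcal{H}$, which is exactly the hypothesis of Corollary \ref{col1}.

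The quickest route is then to invoke Corollary \ref{col1} directly. It gives that $T$ is a bounded bijection on $\bigoplus_{n=1}^\infty\mathcal{H}$, that $T^{-1}$ is bounded, and that $T^{-1}\Psi=E^{-1}\Psi$ for every $\Psi$.

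For self-containedness I would also indicate the steps. Injectivity: if $T\Psi=0$, Lemma \ref{lem1} gives $\Psi=E^{-1}(E\Psi)=E^{-1}(0)=0$. Boundedness of $T^{-1}$ follows from the Open Mapping Theorem, since $\bigoplus_{n=1}^\infty\mathcal{H}$ is a Hilbert space. Representation: given $\Psi$, pick $\Phi$ with $T\Phi=\Psi$. Then $\Phi=E^{-1}(E\Phi)=E^{-1}\Psi$, so $T^{-1}\Psi=E^{-1}\Psi$.

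The main obstacle is the use of Lemma \ref{lem1}, i.e.\ justifying $E^{-1}(E\Psi)=\Psi$. This requires interchanging the sums $\sum_j E^{-1}_{n,j}\sum_k E_{j,k}\psi_k$, which is not automatic for general infinite matrices. Since the paper treats this identity as established in Lemma \ref{lem1} and relies on it in Corollary \ref{col1}, I would cite it and not reprove the interchange. If more care were wanted, I would argue that the columns of $E^{-1}$ lie in $\ell^2$ (via Lemma \ref{lem3}, once $E^{-1}$ is known to be bounded on $\ell^2(\mathbb{N})$ by Theorem \ref{th4}), and then use the continuity of the bounded coordinate functionals to pass the limit through.
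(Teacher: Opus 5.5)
Your argument is the paper's own: Theorem \ref{thl2plus} gives surjectivity, then Corollary \ref{col1} finishes. The step you flag as the main obstacle is a genuine gap, not a formality. Lemma \ref{lem1} is false for general algebraic inverses, and so is the corollary.

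\textbf{Counterexample.} Fix $0<|\lambda|<1$ and let $E_{n,n}=\lambda$, $E_{n,n+1}=-1$, all other entries zero, so $(Ec)_n=\lambda c_n-c_{n+1}$. Let $F_{n,k}=\lambda^{-(k-n+1)}$ for $k\geq n$ and $F_{n,k}=0$ for $k<n$. Then $EF=FE=I$, and every entry of both products is a finite sum.

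On $\ell^2(\mathbb{N})$, $E$ acts as $U=\lambda I-L^{*}$, where $L$ is the unilateral shift. $U$ is surjective, because $\|U^{*}c\|=\|(\bar\lambda I-L)c\|\geq(1-|\lambda|)\|c\|$. But $Uv=0$ for $v=\{\lambda^{n-1}\}_{n=1}^\infty\in\ell^2(\mathbb{N})$. Hence $T$ annihilates $\{\lambda^{n-1}h\}_{n=1}^\infty$ for every $h\in\mathcal{H}$, so $T$ is not injective, and $F(Ev)=0\neq v$.

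The interchange of sums in Lemma \ref{lem1} breaks down because $\sum_{j}|F_{n,j}|\sum_k|E_{j,k}||v_k|=\sum_{j\geq n}2|\lambda|^{n-1}=\infty$. The same matrix satisfies the hypotheses of Corollary \ref{col1} and Theorem \ref{th4}, so citing them does not help.

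\textbf{Your fallback does not close the gap.} It is circular: the representation of $U^{-1}$ by $E^{-1}$ in Theorem \ref{th4} was itself derived from Lemma \ref{lem1}. The condition you name is also not the relevant one. The interchange needs the \emph{rows} of $E^{-1}$ to lie in $\ell^2(\mathbb{N})$. In the example, the columns of $F$ are finitely supported, while its rows grow like $|\lambda|^{-j}$.

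\textbf{What would rescue the statement.} The result needs an extra hypothesis, for instance that every row of $E^{-1}$ lies in $\ell^2(\mathbb{N})$. This is automatic if $E^{-1}$ also defines a matrix mapping on $\ell^2(\mathbb{N})$. Under that hypothesis:
\begin{itemize}
\item $R_n\Phi=\sum_j E^{-1}_{n,j}\phi_j$ is a bounded map $\bigoplus_{n=1}^\infty\mathcal{H}\to\mathcal{H}$.
\item The bounded map $R_nT$ agrees with $\Psi\mapsto\psi_n$ on finitely supported $\Psi$, since $\sum_j E^{-1}_{n,j}E_{j,k}=\delta_{n,k}$.
\item By density, $R_nT\Psi=\psi_n$ on all of $\bigoplus_{n=1}^\infty\mathcal{H}$.
\end{itemize}
This gives injectivity of $T$ and $T^{-1}=E^{-1}$ at once.
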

\begin{proof}
By Theorem \ref{thl2plus}, $T$ is surjective. The rest of the proof is similar to Corollary \ref{col1}.
\end{proof}
\begin{theorem}\label{thEsi-si}
Suppose that $\Psi=\left\lbrace\psi_k\right\rbrace_{k=1}^\infty$ is a norm bounded above sequence in $\mathcal{H}$ and $E=\left(E_{n,k}\right)_{n,k\geq1}$ is an infinite matrix mapping on $\bigoplus_{n=1}^\infty\mathcal H$ such that 
\begin{equation}\label{eql1l2}
\sum_{n=1}^\infty\left(\sum_{k=1}^\infty\left|E_{n,k}-\delta_{n,k}\right|\right)^2\leq M<\infty.
\end{equation}
Then $\left\lbrace\left(E\Psi\right)_n-\psi_n\right\rbrace_{n=1}^\infty\in\bigoplus_{n=1}^\infty\mathcal{H}$.
\end{theorem}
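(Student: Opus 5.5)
The plan is to write the difference $(E\Psi)_n-\psi_n$ as a single series against the matrix $D=E-I$, with entries $D_{n,k}=E_{n,k}-\delta_{n,k}$, and then estimate it by the triangle inequality.

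First I check that $\Psi$ is an $E$-sequence, so that $(E\Psi)_n$ makes sense. Condition \eqref{eql1l2} implies that every row of $D$ lies in $\ell^1(\mathbb{N})$, that is, $\sum_{k}|E_{n,k}-\delta_{n,k}|<\infty$ for each $n$. Hence $\sum_k|E_{n,k}|\leq 1+\sum_k|D_{n,k}|<\infty$. Let $b=\sup_k\|\psi_k\|<\infty$. Then
$$\sum_k\|E_{n,k}\psi_k\|\leq b\sum_k|E_{n,k}|<\infty.$$
So the series $\sum_kE_{n,k}\psi_k$ converges absolutely in $\mathcal{H}$, and $(E\Psi)_n$ is well defined. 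This is the point where norm-boundedness of $\Psi$ replaces membership of $\Psi$ in $\bigoplus\mathcal{H}$; note that $\Psi$ itself need not be square summable.

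Next, for each fixed $n$, both $\sum_kE_{n,k}\psi_k$ and $\sum_k\delta_{n,k}\psi_k=\psi_n$ converge. Subtracting them termwise gives
$$(E\Psi)_n-\psi_n=\sum_{k=1}^\infty (E_{n,k}-\delta_{n,k})\psi_k .$$
The triangle inequality then yields
$$\left\|(E\Psi)_n-\psi_n\right\|\leq b\sum_{k=1}^\infty|E_{n,k}-\delta_{n,k}|.$$
Squaring and summing over $n$, and using \eqref{eql1l2}, gives
$$\sum_{n=1}^\infty\left\|(E\Psi)_n-\psi_n\right\|^2\leq b^2M<\infty,$$
which is exactly the statement that the sequence belongs to $\bigoplus_{n=1}^\infty\mathcal{H}$. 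This is the same estimate as in Lemma \ref{lemabs}, with $E$ replaced by $E-I$.

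The only delicate step is the first one. The hypothesis that $E$ is a matrix mapping on $\bigoplus\mathcal{H}$ does not by itself guarantee that $E\Psi$ is defined for a merely bounded $\Psi$. I handle this by obtaining absolute convergence of each row series directly from \eqref{eql1l2}, and I do not use the matrix-mapping hypothesis at all. Once that is in place, the rest is the routine computation above.
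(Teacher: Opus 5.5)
Your proposal is correct and follows essentially the same route as the paper. You get absolute convergence of each row series from $\sum_k|E_{n,k}|\leq 1+\sqrt{M}$ and $\sup_k\|\psi_k\|\leq b$, then bound $\sum_n\|(E\Psi)_n-\psi_n\|^2$ by $b^2M$ via the triangle inequality, and like the paper you never use the matrix-mapping hypothesis.
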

\begin{proof}
First we show that $E\Psi$ is a well defined sequence in $\mathcal{H}$. 
To this end, we fixed $n\in\mathbb{N}$. Now, by using \eqref{eql1l2} we have
\begin{align*}
\left(\sum_{k=1}^\infty\left|E_{n,k}-\delta_{n,k}\right|\right)^2\leq\sum_{n=1}^\infty\left(\sum_{k=1}^\infty\left|E_{n,k}-\delta_{n,k}\right|\right)^2\leq M<\infty.
\end{align*}
Hence
\begin{align*}
\sum_{k=1}^\infty\left|E_{n,k}\right|-\sum_{k=1}^\infty\left|\delta_{n,k}\right|\leq\sum_{k=1}^\infty\left|E_{n,k}-\delta_{n,k}\right|\leq\sqrt{M}<\infty.
\end{align*}
Therefore, $\sum_{k=1}^\infty\left|E_{n,k}\right|\leq\sqrt{M}+1<\infty$. By assumption there exists a constant $b$ such that $\sup_k\|\psi_k\|\leq b<\infty$. Thus we have
\begin{align}\label{eqabs}
\sum_{k=1}^\infty\left\|E_{n,k}\psi_k\right\|=\sum_{k=1}^\infty\left|E_{n,k}\right|\left\|\psi_k\right\|\leq b\sum_{k=1}^\infty\left|E_{n,k}\right|<\infty.
\end{align}
\eqref{eqabs} shows that $\sum_{k=1}^\infty E_{n,k}\psi_k$ is an absolutely convergent series in the Hilbert space $\mathcal{H}$ and is therefore convergent in $\mathcal{H}$. 

Moreover,
\begin{align}
\sum_{n=1}^\infty\left\Vert\left(E\psi\right)_n-\psi_n\right\Vert^2&=\sum_{n=1}^\infty\left\Vert\sum_{k=1}^\infty E_{n,k}\psi_k-\psi_n\right\Vert^2\\&=\sum_{n=1}^\infty\left\Vert\sum_{k=1}^\infty\left(E_{n,k}-\delta_{n,k}\right)\psi_k\right\Vert^2\nonumber\\&\leq\sum_{n=1}^\infty\left(\sum_{k=1}^\infty\left\vert E_{n,k}-\delta_{n,k}\right\vert\left\Vert\psi_k\right\Vert\right)^2\nonumber\\&\leq b^2\sum_{n=1}^\infty\left(\sum_{k=1}^\infty\left|E_{n,k}-\delta_{n,k}\right|\right)^2\leq b^2M<\infty\nonumber. 
\end{align}
\end{proof}
\begin{theorem}\label{thpert}
Suppose that $\Psi=\left\lbrace\psi_k\right\rbrace_{k=1}^\infty$ is a frame for $\mathcal{H}$ with bounds $A,B$ and $E=\left(E_{n,k}\right)_{n,k\geq1}$ is an infinite matrix that satisfies \eqref{eql1l2}.
 If $BM<A$, then $E\Psi$ is a frame for $\mathcal{H}$ with bounds $$A\left(1-\sqrt{\frac{BM}{A}}\right)^2\quad,\quad B\left(1+\sqrt{M}\right)^2.$$
In fact, $\Psi$ is an $E$-frame, since \eqref{eql1l2} implies that $E$ is an invertible matrix mapping on $\bigoplus_{n=1}^\infty\mathcal{H}$. 
\end{theorem}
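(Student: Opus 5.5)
This is a perturbation argument. The standard perturbation theorem for frames (Christensen, Theorem 22.1.1 / Corollary 22.1.5) says the following. Let $\{\psi_n\}$ be a frame with bounds $A,B$, and let $\{\phi_n\}$ be a sequence in $\mathcal{H}$. Suppose there is $R<A$ such that
$$\sum_{n}\left|\langle f,\psi_n-\phi_n\rangle\right|^2\leq R\|f\|^2 \qquad (f\in\mathcal{H}).$$
Then $\{\phi_n\}$ is a frame with bounds $A(1-\sqrt{R/A})^2$ and $B(1+\sqrt{R/B})^2$.

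So I set $\phi_n=(E\Psi)_n$ and aim to show that the difference sequence $\{(E\Psi)_n-\psi_n\}$ is Bessel with bound $BM$. Then $R=BM<A$, which gives the lower bound directly. For the upper bound, $B(1+\sqrt{BM/B})^2=B(1+\sqrt M)^2$, which is exactly the claimed bound.

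First, well-definedness. A frame is norm bounded: $\|\psi_k\|^2\le B$. So the argument of Theorem \ref{thEsi-si} shows that each row satisfies $\sum_k|E_{n,k}|\le 1+\sqrt M$. Hence each series $\sum_k E_{n,k}\psi_k$ converges absolutely, and $\Psi$ is an $E$-sequence.

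Next, the Bessel estimate. Write $D=E-I$, so that $(E\Psi)_n-\psi_n=\sum_k D_{n,k}\psi_k$. For $f\in\mathcal{H}$, put $c_k=\langle f,\psi_k\rangle$; then $\|c\|_{\ell^2}^2\le B\|f\|^2$. Using continuity of the inner product,
$$\sum_n\left|\langle f,(E\Psi)_n-\psi_n\rangle\right|^2=\sum_n\left|\sum_k \overline{D_{n,k}}\,c_k\right|^2\le \|\bar D\|^2_{\ell^2\to\ell^2}\,B\|f\|^2 .$$
It remains to bound $\|\bar D\|=\|D\|$ on $\ell^2$ by $\sqrt M$. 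Apply Cauchy--Schwarz row by row: $\left|\sum_k D_{n,k}c_k\right|^2\le \big(\sum_k|D_{n,k}|\big)^2\|c\|^2$. Summing over $n$ gives $\|Dc\|^2\le M\|c\|^2$. This yields the Bessel bound $BM$, and the perturbation theorem completes the frame statement. Alternatively, I could redo the perturbation argument directly: $\big(\sum_n|\langle f,(E\Psi)_n\rangle|^2\big)^{1/2}$ differs from $\|T_\Psi^*f\|$ by at most $\sqrt{BM}\|f\|$ by the triangle inequality in $\ell^2$. This gives the upper bound $\sqrt B(1+\sqrt M)$ and the lower bound $\sqrt A-\sqrt{BM}=\sqrt A(1-\sqrt{BM/A})$, and squaring gives both bounds without citing anything.

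The final sentence requires that $E$ be an invertible matrix mapping on $\bigoplus\mathcal{H}$. Cauchy--Schwarz applied in $\mathcal{H}$-norms, as in Theorem \ref{thEsi-si}, gives $\|D\Psi\|\le\sqrt M\|\Psi\|$ for $\Psi\in\bigoplus_{n=1}^\infty\mathcal{H}$. So $E=I+D$ maps $\bigoplus\mathcal{H}$ into itself, with $D$ bounded of norm at most $\sqrt M$.

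For invertibility I would use a Neumann series $\sum_j(-D)^j$. This needs $\|D\|<1$. The hypothesis $BM<A$ gives $M<A/B\le1$, so $\|D\|\le\sqrt M<1$ and $E$ is a bijective bounded operator. The inverse of this operator is represented by the matrix $\sum_j(-D)^j$, whose entries converge absolutely because $\|D\|<1$ on $\ell^2$. That matrix is a two-sided algebraic inverse of $E$.

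This last identification is the delicate step. Infinite matrix products need not be associative, so I would verify the algebraic identity entrywise using absolute convergence, via the row $\ell^1$ bounds. Once the inverse is established, the earlier computation shows that the frame inequality for $E\Psi$ is exactly the $E$-frame inequality for $\Psi$.
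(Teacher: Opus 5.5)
Your proposal is correct and follows the paper's route: the frame perturbation result \cite[Corollary 22.1.5]{ch} with $R=BM<A$, together with invertibility of $E$ from $\|E-I\|\le\sqrt{M}<1$. The differences are minor. You get the Bessel bound $BM$ for $\{(E\Psi)_n-\psi_n\}$ from $\|T_\Psi^*f\|^2\le B\|f\|^2$ and $\|E-I\|_{\ell^2\to\ell^2}\le\sqrt{M}$ (via the row $\ell^1$ bound, which is H\"older rather than Cauchy--Schwarz), whereas the paper uses $\sum_n\|(E\Psi)_n-\psi_n\|^2\le BM$; and you explicitly check that the Neumann-series inverse is a two-sided algebraic inverse matrix, which the paper only asserts.
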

\begin{proof}
Since $\Psi$ is a frame with upper bound $B$, hence $\sup_k\left\|\psi_k\right\|\leq\sqrt{B}$. 
As we showed in Theorem \ref{thEsi-si}, $\Psi$ is an $E$-sequence and $\left(E\Psi-\Psi\right)\in\bigoplus_{n=1}^\infty\mathcal{H}$. Thus for all $\psi\in\bigoplus_{n=1}^\infty\mathcal{H}$, $E\Psi=\left(E\Psi-\Psi\right)+\Psi\in\bigoplus_{n=1}^\infty\mathcal{H}$. Also $E$ is an invertible operator. Indeed, by applying the Fubini--Tonelli Theorem, for all $c=\{c_k\}_{k=1}^\infty\in\ell^2(\mathbb{N})$ we have
\begin{align}\label{eqEc-Ic}
	\left\|Ec-Ic\right\|_{\ell^2}^2=\sum_{n=1}^\infty\left|\left(Ec\right)_n-c_n\right|^2&=\sum_{n=1}^\infty\left|\sum_{k=1}^\infty E_{n,k}c_k-\delta_{n,k}c_k\right|^2\\&\leq\sum_{n=1}^\infty\left(\sum_{k=1}^\infty\left|E_{n,k}-\delta_{n,k}\right||c_k|\right)^2\nonumber\\&=\sum_{n=1}^\infty\left(\sum_{k=1}^\infty\left|E_{n,k}-\delta_{n,k}\right|^{\frac{1}{2}}\left|E_{n,k}-\delta_{n,k}\right|^{\frac{1}{2}}|c_k|\right)^2\nonumber\\&\leq\sum_{n=1}^\infty\sum_{k=1}^\infty\left|E_{n,k}-\delta_{n,k}\right|\sum_{j=1}^\infty\left|E_{n,j}-\delta_{n,j}\right||c_j|^2\nonumber\\&=\sum_{j=1}^\infty|c_j|^2\sum_{n=1}^\infty\sum_{k=1}^\infty\left|E_{n,k}-\delta_{n,k}\right|\left|E_{n,j}-\delta_{n,j}\right|\nonumber,
\end{align}
where $I$ is the identity operator on $\ell^2(\mathbb{N})$.

On the other hand, it is clear that for each $n,j\in\mathbb{N}$, $\left|E_{n,j}-\delta_{n,j}\right|\leq\sum_{k=1}^\infty\left|E_{n,k}-\delta_{n,k}\right|$. Thus
\begin{align}\label{eq29}
\sum_{k=1}^\infty\left|E_{n,k}-\delta_{n,k}\right|	\left|E_{n,j}-\delta_{n,j}\right|\leq\left(\sum_{k=1}^\infty\left|E_{n,k}-\delta_{n,k}\right|\right)^2,
\end{align}
Now we sum \eqref{eq29} from $n=1$ to $\infty$ and use \eqref{eql1l2} to obtain the following
\begin{align*}
	\sum_{n=1}^\infty\sum_{k=1}^\infty\left|E_{n,k}-\delta_{n,k}\right|	\left|E_{n,j}-\delta_{n,j}\right|\leq\sum_{n=1}^\infty\left(\sum_{k=1}^\infty\left|E_{n,k}-\delta_{n,k}\right|\right)^2\leq M.
\end{align*}
Therefore \eqref{eqEc-Ic} implies that $\left\|Ec-Ic\right\|_{\ell^2}^2\leq M\|c\|_{\ell^2}^2$. But $BM<A$ by assumption. Hence
\begin{equation*}
	\left\|E-I\right\|\leq\sqrt{M}<\sqrt{\frac{A}{B}}\leq1,
\end{equation*}
and $E$ is an invertible operator, that is, it has an inverse matrix.
 Moreover, for given $f\in\mathcal{H}$,
\begin{align*}
\sum_{n=1}^\infty\left\vert\left\langle f,\left(E\Psi\right)_n-\psi_n\right\rangle\right\vert^2&\leq\sum_{n=1}^\infty\left\Vert f\right\Vert^2\left\Vert\left(E\Psi\right)_n-\psi_n\right\Vert^2\\&\leq BM\left\Vert f\right\Vert^2.
\end{align*}
As $BM<A$, the remainder of the proof is obtained by applying \cite[Corollary 22.1.5]{ch}.
\end{proof}
\begin{theorem}
Let $\mathcal{F}=\left\{f_k\right\}_{k=1}^\infty$ be a Riesz basis for $\mathcal{H}$ and $E=\left(E_{n,k}\right)_{n,k\geq1}$ be it's associated Gramm matrix, that is $E_{n,k}=\left<f_n,f_k\right>$ for all $n,k\in\mathbb{N}$. Then $\mathcal{F}$ is an $E$-frame.
\end{theorem}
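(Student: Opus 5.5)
Write $\mathcal{F}=\{Ue_k\}_{k=1}^\infty$ with $\{e_k\}$ an orthonormal basis and $U$ bounded and bijective. The Gram matrix is $E_{n,k}=\langle Ue_n,Ue_k\rangle=\langle U^*Ue_n,e_k\rangle$. So $E$ is the matrix of the positive invertible operator $G=U^*U$ in the basis $\{e_k\}$, up to the transpose/conjugate convention. The plan is to verify two facts. First, $\mathcal{F}$ is an $E$-sequence. Second, $(E\mathcal{F})_n$ is the image of a Riesz basis under a bounded bijection, so it is itself a Riesz basis and in particular a frame.

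\textbf{Convergence.} For each fixed $n$, the row $\{E_{n,k}\}_k=\{\langle f_n,f_k\rangle\}_k$ is the analysis sequence of $f_n$ with respect to the Bessel sequence $\mathcal{F}$, so it lies in $\ell^2(\mathbb{N})$. By \cite[Corollary 3.2.5]{ch}, the series $\sum_k E_{n,k}f_k$ then converges in $\mathcal{H}$, so $\mathcal{F}$ is an $E$-sequence. Next I identify the sum. Let $S=UU^*$ be the frame operator of $\mathcal{F}$. Then
\begin{equation*}
\sum_k \langle f_n,f_k\rangle f_k=\sum_k\langle f_n,f_k\rangle f_k=Sf_n ,
\end{equation*}
so $(E\mathcal{F})_n=Sf_n$ for every $n$. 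This step needs care: if the convention were instead $E_{n,k}f_k=\langle f_n,f_k\rangle f_k$, the result is exactly $Sf_n$; any conjugation issue would only change $S$ into an analogous bounded invertible operator. Since the paper's definition uses $E_{n,k}=\langle f_n,f_k\rangle$ literally, we get $(E\mathcal{F})_n=\sum_k\langle f_n,f_k\rangle f_k=Sf_n$ directly.

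\textbf{Frame inequality.} Since $S$ is bounded, invertible and self-adjoint and $\{f_n\}$ is a Riesz basis with bounds $A,B$, the sequence $\{Sf_n\}=\{SUe_n\}$ is again a Riesz basis. Concretely,
\begin{equation*}
\sum_n|\langle f,Sf_n\rangle|^2=\sum_n|\langle Sf,f_n\rangle|^2 ,
\end{equation*}
which lies between $A\|Sf\|^2\ge A\cdot A^2\|f\|^2$ and $B\|S\|^2\|f\|^2\le B^3\|f\|^2$, using $A\,I\le S\le B\,I$. Hence $\mathcal{F}$ is an $E$-frame with bounds $A^3$ and $B^3$.

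\textbf{Invertibility of $E$.} If the definition of $E$-frame is read as requiring $E$ to be an invertible matrix mapping on $\bigoplus\mathcal{H}$, I would also note the following. $E$ is the matrix of $G=U^*U$, a bounded bijection of $\ell^2$ transported via $\{e_k\}$. So $E$ defines a bounded surjective matrix mapping on $\ell^2(\mathbb{N})$, and by Theorem \ref{thl2plus} and the subsequent corollary, also on $\bigoplus\mathcal{H}$. Its algebraic inverse is the matrix of $G^{-1}$; associativity holds because these are matrices of bounded operators with $\ell^2$ rows and columns.

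\textbf{Main obstacle.} The main obstacle is purely bookkeeping: keeping the index and conjugation conventions straight, so that $E\mathcal{F}$ really equals $S\mathcal{F}$ and not some conjugated variant. Any such variant is still a bounded invertible operator applied to $\mathcal{F}$, so the conclusion is robust.
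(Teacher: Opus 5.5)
Your proof is correct, but it reaches the frame inequality by a different route than the paper. The paper uses \cite[Theorem 3.6.6]{ch} to get that the Gram matrix induces a bounded invertible operator on $\ell^2(\mathbb{N})$. It then reuses the computation of Theorem \ref{th3}, namely $\sum_n|\langle f,(E\mathcal{F})_n\rangle|^2=\|E\{\langle f_k,f\rangle\}_k\|_{\ell^2}^2$, and bounds this above and below using $\|E\|$ and $\|E^{-1}\|$. Finally it invokes Theorem \ref{thl2plus} for the matrix-mapping requirement on $\bigoplus_{n=1}^\infty\mathcal{H}$.

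You instead observe that $(E\mathcal{F})_n=\sum_k\langle f_n,f_k\rangle f_k=Sf_n$ holds exactly with the paper's conventions, so your hedging about conjugation is unnecessary. Thus $E\mathcal{F}$ is the image of $\mathcal{F}$ under its own frame operator. For the inequality this needs no operator theory for $E$ on $\ell^2(\mathbb{N})$ at all. It also gives a stronger conclusion: $E\mathcal{F}$ is itself a Riesz basis, with explicit bounds $A^3,B^3$ valid for any frame bounds $A,B$ of $\mathcal{F}$.

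For optimal bounds the two routes give the same constants, since $\|E\|=\|U\|^2=B$ and $\|E^{-1}\|=\|U^{-1}\|^2=1/A$. Hence $\|E\|^2B=B^3$ and $A/\|E^{-1}\|^2=A^3$.

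In exchange, the paper's route treats the Gram matrix as one instance of the general principle behind Theorem \ref{th3}: a bounded invertible matrix on $\ell^2(\mathbb{N})$ maps frames to frames. It also obtains the boundedness and invertibility of $E$ needed for the definition of an $E$-frame in the same stroke. For that last requirement, your argument via $G=U^*U$ and Theorem \ref{thl2plus} is essentially the paper's.
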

\begin{proof}
By \cite[Theorem 3.6.6]{ch}, $E$ induces a bounded invertible operator on $\ell^2(\mathbb{N})$. As we showed in Theorem \ref{th3}, $\mathcal{F}$ is an $E$-sequence and the sequence $E\mathcal{F}$ satisfies the upper and lower frame bounds. Also by Theorem \ref{thl2plus}, $E$ can be considered as a matrix mapping on $\bigoplus_{n=1}^\infty\mathcal{H}$. Therefore, the conditions for defining $E$-frame are met.
\end{proof}



\begin{thebibliography}{0}

\bibitem{CA} P.G. Casazza, \emph{The art of frame theory}, Taiwan. J. Math., \textbf{4}(2) (2000), 129-–201.

\bibitem{ch} O. Christensen, \emph{An introduction to frames and Riesz bases}, Birkhauser, Boston, 2016.

\bibitem{Conway} J. B. Conway, {\it A Course in Functional Analysis,}
2nd Edition, Springer-Verlag, 1990.

\bibitem{DG} I. Daubechies, A. Grassman and Y. Meyer, \emph{Painless nonothogonal expanisions}, J. Math. Phys.,  
\textbf{27} (1986), 1271--1283.

\bibitem{DS} R.J. Duffin and A.C. Schaeffer, \emph{A class of nonharmonic Fourier series}, Trans. Amer.
Math. Soc., \textbf{72} (1952), 341--366.

\bibitem{HAN} D. Han and D. Larson, \emph{Frame, Bases and group representations}, Memoir. Amer. Math. Soc. \textbf{147} (2000), 1–-94.

\bibitem{TAL} G. Talebi, M. A. Dehghan, \textit{On $E$-frames in separable Hilbert spaces},
Banach J. Math. Anal. 9(3) (2015), 43--74.



\end{thebibliography}
\end{document}